\newtheorem{theorem}{Theorem}[section]
\newtheorem{lemma}[theorem]{Lemma}
\theoremstyle{definition}
\newtheorem{remark}[theorem]{Remark}
\newcommand{\tr}{{\rm tr}}
\newcommand{\id}{{\rm id}}
\renewcommand{\div}{{\rm div}}
\newcommand{\curl}{{\rm curl}}
\newcommand{\as}{{\rm as}}
\renewcommand{\S}{\mathds{S}}
\newcommand{\R}{\mathds{R}}
\newcommand{\Z}{\mathds{Z}}
\newcommand{\cS}{{\mathcal S}}
\newcommand{\cT}{{\mathcal T}}
\title[Constrained $L^p$ Approximation of Shape Tensors]
{Constrained $L^p$ Approximation of Shape Tensors and its Role for the Determination of Shape Gradients}
\author[Laura Hetzel and Gerhard Starke]{}
\subjclass{Primary: 65N30, 49M05.}
\keywords{Shape derivative, shape gradient, shape tensor, $L^p$ approximation, weak symmetry.}
\thanks{The authors gratefully acknowledge support by the Deutsche Forschungsgemeinschaft in the Priority Program SPP 1962
``Non-smooth and Complementarity-based Distributed Parameter Systems: Simulation and Hierachical Optimization'' under the project number STA 402/13-2.}
\thanks{$^*$Corresponding author: Gerhard Starke}
\begin{document}
\maketitle

\centerline{\scshape
Laura Hetzel$^{{\href{mailto:laura.hetzel@uni-due.de}{\textrm{\Letter}}}1}$
and Gerhard Starke$^{{\href{mailto:gerhard.starke@uni-due.de}{\textrm{\Letter}}}*1}$}

\medskip

{\footnotesize
 \centerline{$^1$Fakult\"at f\"ur Mathematik, Universit\"at Duisburg-Essen, 45117 Essen, Germany}
} 

\bigskip

 \centerline{(Communicated by Handling Editor)}


\begin{abstract}
  This paper extends our earlier work \cite{Sta:24} on the $L^p$ approximation of the shape tensor by Laurain and Sturm \cite{LauStu:16}. In particular,
  it is shown that the weighted $L^p$ distance to an affine space of admissible {\em symmetric\/} shape tensors satisfying a divergence constraint provides the
  shape gradient with respect to the $L^{p^\ast}$-norm (where $1/p + 1/p^\ast = 1$) of the elastic strain associated with the shape deformation. This approach
  allows the combination of two ingredients which have already been used successfully in numerical shape optimization: (i) departing from the Hilbert space framework
  towards the Lipschitz topology approximated by $W^{1,p^\ast}$ with $p^\ast > 2$ and (ii) using the symmetric rather than the full gradient to define the norm.
  Similarly to \cite{Sta:24}, the $L^p$ distance measures the shape stationarity by means of the dual norm of the shape derivative with respect to the
  above-mentioned $L^{p^\ast}$-norm of the elastic strain. Moreover, the Lagrange multiplier for the momentum balance constraint constitute
  the steepest descent deformation with respect to this norm. The finite element realization of this approach is done using the weakly symmetric PEERS
  element and its three-dimensional counterpart, respectively. The resulting piecewise constant approximation for the Lagrange multiplier is reconstructed
  to a shape gradient in $W^{1,p^\ast}$ and used in an iterative procedure towards the optimal shape.
\end{abstract}


\section{Introduction}
\label{sec-introduction}

In the last decades, shape optimization has become a mature classical area of applied mathematics which is still open to new developments in its numerical
treatment. The survey paper by Allaire, Dapogny and Jouve \cite{AllDapJou:21} gave an excellent overview of the current status. Using the volume expression
of the shape derivative for the numerical approximation of shape optimization problems was systematically studied in \cite{HipPagSar:15} inspired by the earlier
work of Berggren \cite{Ber:10}. Besides being defined under weaker assumptions on the regularity of the underlying boundary value problems, the volume
representation of the shape derivative also proved to have advantages for the finite element approximation. The following contributions are also worth being
mentioned in this context: \cite{SchSieWel:16} developing Steklov-Poincar\'{e} metrics and \cite{EigStu:18} introducing variable metric algorithms based on reproducing
kernels. Shape optimization problems can also be viewed from the optimal control perspective, see e.g. \cite{HauSieUlb:21,ApeMatPfeRoe:18}. Nevertheless,
the problem of mesh degeneracy
in the course of an iterative method based on shape gradients remains, and two remedies were recently suggested: (i) restricting the admissible deformations
to those consistent with Hadamard's structure theorem \cite{EtlHerLoaWac:20} or in the spirit of a Riemannian metric \cite{HerLoa:24} and (ii) abandoning the
Hilbert space framework towards the Lipschitz topology for the definition of the shape gradient \cite{DecHerHin:22,DecHerHin:23a,DecHerHin:23b}.

Our approach is based on the shape tensor representation which, to the best of our knowledge, was studied first in the paper by Laurain and Sturm \cite{LauStu:16}.
It was extended in \cite{Lau:20} and \cite{LauLopNak:23} to shape optimization problems with rather general variational problems as constraints and specifically used
in the context of time-domain full waveform inversion in \cite{AlbLauYou:21} and high-temperature superconductivity in \cite{LauWinYou:21}.
Recently, in \cite{Sta:24}, the shape tensor was used as a starting point for measuring the distance of a given shape to stationarity and for the construction of shape
gradients in $W^{1,p^\ast}$ with $p^\ast \geq 2$.
To this end, an $L^p$ approximation problem (with $1/p + 1/p^\ast = 1$, $1 < p \leq p^\ast < \infty$) for the shape tensor subject to divergence constraints was derived.
In the present paper, we modify this approach by adding symmetry as constraint on the approximation space for the shape tensor. We will prove that the distance,
measured in a suitably weighted $L^p$-norm is equal to the dual norm of the shape derivative with respect to an $L^{p^\ast}$-norm associated with the linear elastic
strain of the deformation. This is of interest from a numerical point of view since elasticity-based formulations for the computation of shape gradients turned out
to have favorable properties with respect to mesh deformations (see e.g. \cite{EtlHerLoaWac:20} and \cite{BarWac:20}). Another issue where this contribution goes
beyond our earlier work \cite{Sta:24}  is the incorporation of constraints (like prescribed volume or perimeter).

The considered $L^p$ approximation problem for the shape tensor has some resemblance, at least for $p = 2$, to the constrained first-order system least squares
approach to linear elasticity, see e.g. \cite{AdlVas:14} and \cite{CaiSta:04}. The finite element approximation of the tensorial approximation is somewhat more
complicated than in \cite{Sta:24} due to the additionally imposed symmetry. Apparently, the simplest possible stable finite element combination for this purpose
is the PEERS (``planar elasticity element with reduced stress'') element \cite{ArnBreDou:84} and its three-dimensional variant (see \cite[Sect. 9.4.1]{BofBreFor:13}).
These approaches are based on matrix-valued (lowest-order) Raviart-Thomas spaces enriched by suitable divergence-free bubble functions. Higher-order
approximations would only lead to optimal convergence behavior if parametric Raviart-Thomas spaces \cite{BerSta:16} were used based on a
higher-order parametrization of the boundaries. We plan to explore this direction in the future but restrict ourselves to the lowest-order case in this contribution.
The shape deformation arises as a Lagrange multiplier for the divergence constraint which is approximated by piecewise constant functions for the above-mentioned
finite element combinations. Efficient procedures for the reconstruction of admissible deformations in $W^{1,p^\ast}$ from these discontinuous approximations are
available (see, e.g., \cite{Ste:91} or \cite{ErnVoh:15}).

The structure of this paper is as follows: We start in Section \ref{sec-shape_tensor} with the classical shape optimization problem constrained by the Poisson
equation with Dirichlet conditions and introduce the Laurain-Sturm shape tensor as well as the shape gradient with respect to an elastic strain norm. The
connection between the dual norm of the shape derivative and the constrained best approximation of the shape tensor is investigated in Section
\ref{sec-shape_tensor_best_approximation}.
Section \ref{sec-discretization} is concerned with the discretization of the best approximation problem in a suitable mixed finite element setting.
An iterative scheme of shape gradient type based on the reconstruction of an admissible deformation is described in Section \ref{sec-shape_gradient_iteration}.
Throughout the paper, a simple test example with a known optimal shape in form of a disk is used to illustrate the results numerically. In this section, our shape
gradient iteration is also tested for a more sophisticated example with a complicated optimal shape. For problems where the center of gravity and the orientation
is not known for the optimal shape, a combination of the shape gradient iteration with an adjustment of the rigid body modes is presented in Section
\ref{sec-rigid_body_modes}. Finally, Section \ref{sec-constraints} contains the important generalization of the shape tensor
approximation to problems with constraints like prescribed volume or surface area.

\section{Shape tensor formulation and shape gradients with respect to elastic strain norms}
\label{sec-shape_tensor}

We consider the minimization of a shape functional
\begin{equation}
  J (\Omega) = \int_\Omega j (u_\Omega) \: dx
  \label{eq:shape_functional}
\end{equation}
subject to the constraint
\begin{equation}
  ( \nabla u_\Omega , \nabla v )_{L^2 (\Omega)} = ( f , v )_{L^2 (\Omega)}
  \mbox{ for all } v \in H_0^1 (\Omega)
  \label{eq:bvp}
\end{equation}
for $u_\Omega \in H_0^1 (\Omega)$ among all shapes $\Omega \in \cS$, where
\begin{equation}
   \cS = \{ \Omega = (\id + \theta) \Omega_0 : \theta , (\id + \theta)^{-1} - \id \in W^{1,\infty} (\Omega;\R^d) \}
  \label{eq:set_of_shapes}
\end{equation}
with the identity mapping $\id : \R^d \rightarrow \R^d$ and a fixed reference domain $\Omega_0$, e.g. the unit disk in $\R^2$ or unit ball in $\R^3$.
The shape derivative with respect to deformations $\id + \chi$ applied to $\Omega$ exists under certain assumptions on $f$, $j ( \: \cdot \: )$ and $j^\prime ( \: \cdot \: )$
and can be expressed as
\begin{equation}
  \begin{split}
    J^\prime (\Omega) [ \chi ]
    & = \left( \left( (\div \: \chi) \: I - \left( \nabla \chi + (\nabla \chi)^T \right) \right) \nabla u_\Omega , \nabla y_\Omega \right) \\
    & \hspace{4cm} + \left( f \: \nabla y_\Omega , \chi \right) + \left( j (u_\Omega) , \div \: \chi \right) \: ,
  \end{split}
  \label{eq:shape_derivative_pre}
\end{equation}
where $y_\Omega \in H_0^1 (\Omega)$ solves the adjoint problem
\begin{equation}
  ( \nabla y_\Omega , \nabla z ) = - ( j^\prime (u_\Omega) , z ) \mbox{ for all } z \in H_0^1 (\Omega)
  \label{eq:adjoint_bvp}
\end{equation}
(cf. \cite[Prop. 4.5]{AllDapJou:21}).
In (\ref{eq:shape_derivative_pre}) and for the rest of this paper, $( \cdot , \cdot )$ denotes the duality pairing $( L^p (\Omega) , L^{p^\ast} (\Omega) )$ with
$1/p + 1/p^\ast = 1$ ($1 < p < \infty$). The corresponding duality pairing on the boundary $\partial \Omega$ will be denoted by $\langle \cdot , \cdot \rangle$.
Moreover, our notation is such that gradients of scalar functions (like $\nabla u_\Omega \in \R^d$) are considered as column
vectors while gradients of vector functions (like $\nabla \theta \in \R^{d \times d}$) are understood as each row being the gradient of a component of $\theta \in \R^d$.
It is easy to see that $J^\prime (\Omega)$ is a continuous linear functional on $W^{1,\infty} (\Omega;\R^d)$ if only the following assumptions are fulfilled:
$f \in L^2 (\Omega)$, $j (u) \in L^1 (\Omega)$ for all $u \in H_0^1 (\Omega)$, $j^\prime (u) \in H^{-1} (\Omega)$ for all $u \in H_0^1 (\Omega)$. Under stronger
assumptions to be made more precise in the next section, $J^\prime (\Omega)$ will even become continuous on the larger spaces $W^{1,p^\ast} (\Omega;\R^d)$
for $p^\ast < 1$.

With the above notation in mind, (\ref{eq:shape_derivative_pre}) can be written as
\begin{equation}
  J^\prime (\Omega) [ \chi ] = \left( K ( u_\Omega ,  y_\Omega ) , \nabla \chi \right)
  + \left( f \: \nabla y_\Omega , \chi \right) + \left( j (u_\Omega) , \div \: \chi \right)
  \label{eq:shape_derivative}
\end{equation}
with
\begin{equation}
  K ( u_\Omega , y_\Omega ) = \left( \nabla u_\Omega \cdot \nabla y_\Omega \right) \: I
  - \nabla y_\Omega \otimes \nabla u_\Omega - \nabla u_\Omega \otimes \nabla y_\Omega \: ,
  \label{eq:definition_K}
\end{equation}
where $\otimes$ stands for the outer product. The shape tensor $K ( u_\Omega , y_\Omega )$ was introduced and studied by Laurain and Sturm in \cite{LauStu:16}.
Note that, in general, only $K ( u_\Omega , y_\Omega ) \in L^p (\Omega ; \R^{d \times d})$ for $p$ only slightly larger than (and arbitrarily close to) 1 can be
guaranteed for the Sturm-Laurain shape tensor since
$\nabla u_\Omega$ and $\nabla y_\Omega$ may only be in $L^{2+\epsilon} ( \Omega ; \R^{d \times d} )$ without additional regularity assumptions.
This means that, to be on the safe side, $J^\prime (\Omega) [\chi]$ should only be applied to deformations $\chi \in W^{1,p^\ast} (\Omega;\R^d)$ for
correspondingly large $p^\ast$. While $p^\ast$ should ideally be chosen as large as possible, we will also see that the numerical construction of a shape gradient
will become more difficult for large values of $p^\ast$.

In the following derivation, we always assume $p^\ast \in [ 2 , \infty )$ (such that the dual exponent satisfies $p \in ( 1 , 2 ]$). For fixed $\Omega \in \cS$, the shape
derivative $J^\prime (\Omega)$ is a linear functional on the space of deformations $W^{1,p^\ast} (\Omega ; \R^d)$. Let us restrict ourselves to the subspace of
deformations
\begin{equation}
  \begin{split}
    \Theta^{p^\ast}_{\perp,{\rm RM}} & = \{ \theta \in W^{1,p^\ast} (\Omega ; \R^d) : ( \nabla \theta , \nabla \rho ) = 0 \mbox{ for all } \rho \in {\rm RM} (\R^d) \\
    & \hspace{4cm} \mbox{ and } ( \theta , e ) = 0 \mbox{ for all constant } e \in \R^d \} \: ,
  \end{split}
  \label{eq:deformation_orthogonal_RM}
\end{equation}
where ${\rm RM} (\R^d) = \{ \chi : \nabla \chi + (\nabla \chi)^T = 0 \}$ denotes the space of rigid body modes. This means that the considered deformations are
orthogonal with respect to the gradient inner product to all rotations and $L^2 (\Omega)$-orthogonal to translations. This is a perfectly admissible sense of
orthogonality with respect to the rigid body modes and will enable us to derive the intricate relations to symmetric approximations of the Sturm-Laurain shape
tensor $K ( u_\Omega,y_\Omega )$ in the next section. Note that restricting the deformations to $\Theta^{p^\ast}_{\perp,{\rm RM}}$, i.e., orthogonal to the
rigid body modes, means that the barycenter and the orientation in space is fixed. On the subspace $\Theta^{p^\ast}_{\perp,{\rm RM}}$,
$\| \varepsilon (\: \cdot \:) \|_{L^{p^\ast} (\Omega)}$ with $\varepsilon (\theta) = (\nabla \theta + (\nabla \theta)^T)/2$ constitutes a norm due to
Korn's inequality (see e.g. \cite{JiaKau:17}) and Poincare's inequality (see e.g. \cite[Theorem II.5.4]{Gal:11}). The shape derivative,
constituting an object in the dual space of the admissible deformations, may therefore be normed by
\begin{equation}
  \| J^\prime (\Omega) \|_{p^\ast;\perp,{\rm RM}}^\ast = \sup_{\chi \in \Theta^{p^\ast}_{\perp,{\rm RM}}}
  \frac{J^\prime (\Omega) [ \chi ]}{\| \varepsilon (\chi) \|_{L^{p^\ast} (\Omega)}} \: .
  \label{eq:dual_norm_elastic}
\end{equation}
A necessary condition for a deformation $\theta \in \Theta^{p^\ast}_{\perp,{\rm RM}}$ being a maximizer of the right-hand side in (\ref{eq:dual_norm_elastic})
is that
\begin{equation}
  \begin{split}
    D_\chi & \left. \left( \frac{J^\prime (\Omega) [\chi]}{\| \varepsilon (\chi) \|_{L^{p^\ast} (\Omega)}} \right) \right|_{\chi = \theta} [ \vartheta ] =
    \left. D_\chi \left( \frac{J^\prime (\Omega) [\chi]}{\left( \| \varepsilon (\chi) \|_{L^{p^\ast} (\Omega)}^{p^\ast} \right)^{1/p^\ast}} \right) \right|_{\chi = \theta} [ \vartheta ] \\
    & = \frac{J^\prime (\Omega) [\vartheta]}{\left( \| \varepsilon (\theta) \|_{L^{p^\ast} (\Omega)}^{p^\ast} \right)^{1/p^\ast}}
    - \frac{1}{p^\ast} \frac{J^\prime (\Omega) [\theta]}{\left( \| \varepsilon (\theta) \|_{L^{p^\ast} (\Omega)}^{p^\ast} \right)^{1/p^\ast+1}}
    \left( | \varepsilon (\theta) |^{p^\ast-2} \varepsilon (\theta) , \varepsilon (\vartheta) \right)
  \end{split}
  \label{eq:derivative_dual_norm_elastic}
\end{equation}
vanishes for all $\vartheta \in W^{1,p^\ast} (\Omega;\R^d)$. This is equivalent to
\begin{equation}
  \frac{1}{p^\ast} \frac{J^\prime (\Omega) [\theta]}{\| \varepsilon (\theta) \|_{L^{p^\ast} (\Omega)}^{p^\ast}}
  \left( | \varepsilon (\theta) |^{p^\ast-2} \varepsilon (\theta) , \varepsilon (\vartheta) \right)
  = J^\prime (\Omega) [\vartheta] \mbox{ for all } \vartheta \in W^{1,p^\ast} (\Omega;\R^d)
  \label{eq:p_dual_norm_elastic_variational_pre}
\end{equation}
or, since (\ref{eq:p_dual_norm_elastic_variational}) is independent of the scaling of $\theta$, simply to
\begin{equation}
  \left( | \varepsilon (\theta) |^{p^\ast-2} \varepsilon (\theta) , \varepsilon (\vartheta) \right) = J^\prime (\Omega) [\vartheta]
  \mbox{ for all } \vartheta \in W^{1,p^\ast} (\Omega;\R^d) \: .
  \label{eq:p_dual_norm_elastic_variational}
\end{equation}
If we choose $\vartheta \in {\rm RM} (\R^d)$ in (\ref{eq:p_dual_norm_elastic_variational}), we get that $J^\prime (\Omega) [\vartheta] = 0$ for all
$\vartheta \in {\rm RM} (\R^d)$. Using (\ref{eq:shape_derivative}) this leads to the compatibility condition
\begin{equation}
  ( f \: \nabla y_\Omega , \rho ) = 0 \mbox{ for all } \rho \in {\rm RM} (\R^d) \: .
  \label{eq:compatibility_condition_RM}
\end{equation}
This condition means that the shape derivative vanishes in direction of all rigid body modes which has the geometrical interpretation that the barycenter
and the orientation of $\Omega$ are already known to be optimal.

Using the fact that $K ( u_\Omega , y_\Omega )$ is symmetric, i.e., $K ( u_\Omega , y_\Omega ) \in L^p (\Omega ; \S^{d \times d})$ with
$\S^{d \times d} \subset \R^{d \times d}$ denoting the subspace of symmetric matrices, we can rewrite (\ref{eq:shape_derivative}) as
\begin{equation}
  J^\prime (\Omega) [ \vartheta ] = \left( K ( u_\Omega ,  y_\Omega ) , \varepsilon (\vartheta) \right)
  + \left( f \: \nabla y_\Omega , \vartheta \right) + \left( j (u_\Omega) , \div \: \vartheta \right) \:
  \label{eq:shape_derivative_symmetric}
\end{equation}
provided that $\vartheta \in L^{p^\ast} (\Omega ; \R^d)$.
The variational equation (\ref{eq:p_dual_norm_elastic_variational}) for $\theta \in \Theta_{\perp,{\rm RM}}^{p^\ast}$ can therefore be rewritten as
\begin{equation}
  \left( | \varepsilon (\theta) |^{p^\ast-2} \varepsilon (\theta) - K ( u_\Omega , y _\Omega ) , \varepsilon (\vartheta) \right)
  = \left( f \: \nabla y_\Omega , \vartheta \right) + \left( j (u_\Omega) , \div \: \vartheta \right)
  \mbox{ for all } \vartheta \in \Theta_{\perp,{\rm RM}}^{p^\ast} \: .
  \label{eq:shape_tensor_variational}
\end{equation}
If we define $S := K ( u_\Omega , y_\Omega ) - | \varepsilon (\theta) |^{p^\ast-2} \varepsilon (\theta)$, a straightforward calculation leads to
\begin{equation}
  \begin{split}
    \| S - K ( u_\Omega , y_\Omega ) \|_{L^p (\Omega)}^p & = \int_\Omega | \varepsilon (\theta) |^{(p^\ast-1) p} dx
    = \int_\Omega | \varepsilon (\theta) |^{(p^\ast-1) / (1 - 1/p^\ast)} dx \\
    & = \int_\Omega | \varepsilon (\theta) |^{p^\ast} dx = \| \varepsilon (\theta) \|_{L^{p^\ast} (\Omega)}^{p^\ast} \: .
  \end{split}
  \label{eq:relation_p_pstar}
\end{equation}
This means that, under the assumption that $K ( u_\Omega , y_\Omega ) \in L^p (\Omega ; \S^{d \times d})$, we also have $S \in L^p (\Omega ; \S^{d \times d})$.
Moreover, by definition and using (\ref{eq:shape_tensor_variational}), $S$ satisfies
\begin{equation}
  ( S , \varepsilon (\vartheta) ) + \left( f \: \nabla y_\Omega , \vartheta \right) + \left( j (u_\Omega) , \div \: \vartheta \right) = 0
  \mbox{ for all } \vartheta \in \Theta_{\perp,{\rm RM}}^{p^\ast} \: .
\end{equation}
Let us define the affine subspace
\begin{equation}
  \begin{split}
    \widetilde{\Sigma}^{p,0}_{f,j,{\rm sym}}
    & = \{ T \in L^p (\Omega ; \S^{d \times d}) : ( T , \varepsilon (\vartheta) ) + ( f \: \nabla y_\Omega , \vartheta ) + ( j ( u_\Omega) , \div \: \vartheta ) = 0 \\
    & \hspace{7cm} \mbox{ for all } \vartheta \in \Theta_{\perp,{\rm RM}}^{p^\ast} \}
  \end{split}
  \label{eq:constraint_set_p}
\end{equation}
and note that $S \in \widetilde{\Sigma}^{p,0}_{f,j,{\rm sym}}$. For all $T \in \widetilde{\Sigma}^{p,0}_{f,j,{\rm sym}}$ and for all
$\vartheta \in \Theta_{\perp,{\rm RM}}^{p^\ast}$, we have
\begin{equation}
  J^\prime (\Omega) [\vartheta] = ( K ( u_\Omega , y_\Omega ) - T , \varepsilon (\vartheta) )
  \label{eq:relation_shape_derivative_tensor}
\end{equation}
and therefore also
\begin{equation}
  \frac{J^\prime (\Omega) [\vartheta]}{\| \varepsilon (\vartheta) \|_{L^{p^\ast} (\Omega)}}
  = \frac{( K ( u_\Omega , y_\Omega ) - T , \varepsilon (\vartheta) )}{\| \varepsilon (\vartheta) \|_{L^{p^\ast} (\Omega)}}
  \leq \| K ( u_\Omega , y_\Omega ) - T \|_{L^p (\Omega)}
  \label{eq:relation_shape_derivative_tensor_bound}
\end{equation}
by H\"older's inequality which implies that
\begin{equation}
  \| J^\prime (\Omega) \|_{p^\ast;\perp,{\rm RM}}^\ast \leq \| K ( u_\Omega , y_\Omega ) - T \|_{L^p (\Omega)}
  \mbox{ for all } T \in \widetilde{\Sigma}^{p,0}_{f,j,{\rm sym}} \: .
  \label{eq:upper_bound_tensor_approximation}
\end{equation}
On the other hand, we have
\begin{equation}
  \begin{split}
    \| J^\prime (\Omega) \|_{p^\ast;\perp,{\rm RM}}^\ast
    & = \frac{J^\prime (\Omega) [\theta]}{\| \varepsilon (\theta) \|_{L^{p^\ast} (\Omega)}}
    = \frac{( K ( u_\Omega , y_\Omega ) - S , \varepsilon (\theta) )}{\| \varepsilon (\theta) \|_{L^{p^\ast} (\Omega)}} \\
    & = \| \varepsilon (\theta) \|_{L^{p^\ast} (\Omega)}^{p^\ast-1} = \| \varepsilon (\theta) \|_{L^{p^\ast} (\Omega)}^{p^\ast/p} 
    = \| K ( u_\Omega , y_\Omega ) - S \|_{L^p (\Omega)}
  \end{split}
  \label{eq:equality_tensor_approximation}
\end{equation}
due to the definition of $S$ and (\ref{eq:relation_p_pstar}). We have thus shown that
\begin{equation}
  \| J^\prime (\Omega) \|_{p^\ast;\perp,{\rm RM}}^\ast = \min_{T \in \widetilde{\Sigma}^{p,0}_{f,j,{\rm sym}}} \| T - K ( u_\Omega , y_\Omega ) \|_{L^p (\Omega)} \: .
  \label{eq:dual_norm_as_best_approximation_symmetric_pre}
\end{equation}
In the next section, we will exploit this relation by investigating the best approximation problem on the right hand side in
(\ref{eq:dual_norm_as_best_approximation_symmetric_pre}) in detail.

We close this section by putting these results into perspective with respect to our recent work \cite{Sta:24}. There, we considered the following dual norm of the
shape derivative:
\begin{equation}
  \| J^\prime (\Omega) \|_{p^\ast;\perp}^\ast = \sup_{\chi \in \Theta_\perp^{p^\ast}}
  \frac{J^\prime (\Omega) [ \chi ]}{\| \nabla \chi \|_{L^{p^\ast} (\Omega)}} \: ,
  \label{eq:dual_norm}
\end{equation}
where $\Theta_\perp^{p^\ast} = \{ \chi \in W^{1,p^\ast} (\Omega ; \R^d) : ( \chi , e ) = 0 \mbox{ for all constant } e \in \R^d \}$ (since only the constants
are in the null space of the gradient operator). In \cite[Theorem 3.3]{Sta:24}, it is proved that
\begin{equation}
  \| J^\prime (\Omega) \|_{p^\ast;\perp}^\ast = \min_{T \in \Sigma^{p,0}_{f,j}} \| T - K ( u_\Omega , y_\Omega ) \|_{L^p (\Omega)}
  \label{eq:dual_norm_as_best_approximation}
\end{equation}
holds, where
\begin{equation}
  \begin{split}
    \Sigma^{p,0}_{f,j} & = \{ T \in L^p (\Omega;\R^{d \times d}) : ( \div \: T , \chi ) =  ( f \: \nabla y_\Omega , \chi ) + ( j ( u_\Omega) , \div \: \chi ) \\
    & \hspace{4cm} \mbox{ and } \langle T \cdot n |_{\partial \Omega} , \chi |_{\partial \Omega} \rangle = 0 \mbox{ for all } \chi \in \Theta_\perp^{p^\ast} \}
  \end{split}
  \label{eq:constraints_old}
\end{equation}
(again assuming that $( f \: \nabla y_\Omega , e ) = 0$ holds for all constant $e \in \R^d$). In short, $\Sigma^{p,0}_{f,j}$ is the modification of the space
$\widetilde{\Sigma}^{p,0}_{f,j,{\rm sym}}$ above, where the symmetry condition is removed and integration by parts was applied.

To derive the relation between the $L^p (\Omega)$ best approximation of the shape tensor and the dual norm of the shape derivative again here for the
symmetric case and the norm associated with linear elastic strain is, in our opinion, not just a generalization for its own sake. It is motivated by the recent
positive experiences with using elasticity type norms in shape optimization (see e.g. \cite{EtlHerLoaWac:20,BarWac:20}) and the possible benefits of
combining this with the departure from the Hilbert space setting by computing shape gradients with respect to $W^{1,p^\ast} (\Omega ; \R^d)$ for $p^\ast > 2$.

\section{Constrained $L^p$ best approximation of shape tensors and its relation to $W^{1,p^\ast}$ shape gradients}
\label{sec-shape_tensor_best_approximation}

In this section, we study the best approximation problem on the right-hand side in (\ref{eq:dual_norm_as_best_approximation_symmetric_pre}) in more detail.
The following result is similar to \cite[Theorem 2.1]{Sta:24}, adapted to the different setting described above.

\begin{theorem}
  Let $p \in ( 1 , 2 ]$ and assume that $K ( u_\Omega , y_\Omega ) \in L^p ( \Omega ; \S^{d \times d} )$.
  If the compatibility condition $( f \: \nabla y_\Omega , \rho ) = 0$ for all $\rho \in {\rm RM} (\R^d)$ is fulfilled, then
  there is a uniquely determined $L^p (\Omega)$-best approximation $S \in \widetilde{\Sigma}^{p,0}_{f,j,{\rm sym}}$ to $K ( u_\Omega , y_\Omega )$, i.e.,
  \begin{equation}
    \| S - K ( u_\Omega , y_\Omega ) \|_{L^p (\Omega)} = \inf_{T \in \widetilde{\Sigma}^{p,0}_{f,j,{\rm sym}}} \| T - K ( u_\Omega , y_\Omega ) \|_{L^p (\Omega)} \: .
    \label{eq:best_approximation_S}
  \end{equation}
  \label{theorem-unique_S}
\end{theorem}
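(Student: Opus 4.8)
The plan is to phrase \eqref{eq:best_approximation_S} as a best-approximation problem in a uniformly convex Banach space and invoke the classical existence-and-uniqueness theorem for nearest points in closed convex sets. First I would verify that $\widetilde{\Sigma}^{p,0}_{f,j,{\rm sym}}$ is a nonempty, closed, convex (indeed affine) subset of $L^p(\Omega;\S^{d\times d})$. Nonemptiness is exactly where the compatibility condition enters: the element $S$ constructed explicitly in Section~\ref{sec-shape_tensor} as $K(u_\Omega,y_\Omega) - |\varepsilon(\theta)|^{p^\ast-2}\varepsilon(\theta)$ lies in $\widetilde{\Sigma}^{p,0}_{f,j,{\rm sym}}$, but that construction presupposed that the maximizer $\theta$ of \eqref{eq:dual_norm_elastic} exists; alternatively, and more cleanly, I would argue nonemptiness directly. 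The compatibility condition $(f\,\nabla y_\Omega,\rho)=0$ for all $\rho\in{\rm RM}(\R^d)$ guarantees that the linear functional $\vartheta\mapsto -(f\,\nabla y_\Omega,\vartheta)-(j(u_\Omega),\div\vartheta)$ is well-defined and bounded on $\Theta^{p^\ast}_{\perp,{\rm RM}}$ (on which $\|\varepsilon(\cdot)\|_{L^{p^\ast}}$ is a norm by Korn plus Poincaré, as noted); since $\vartheta\mapsto\varepsilon(\vartheta)$ embeds $\Theta^{p^\ast}_{\perp,{\rm RM}}$ into $L^{p^\ast}(\Omega;\S^{d\times d})$, the Hahn--Banach theorem extends this functional to a bounded functional on all of $L^{p^\ast}(\Omega;\S^{d\times d})$, which by the Riesz representation for $L^{p^\ast}$ is given by pairing against some $T_0\in L^p(\Omega;\S^{d\times d})$; this $T_0$ then lies in $\widetilde{\Sigma}^{p,0}_{f,j,{\rm sym}}$.

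Next I would check that $\widetilde{\Sigma}^{p,0}_{f,j,{\rm sym}}$ is closed in $L^p(\Omega;\S^{d\times d})$: for fixed $\vartheta\in\Theta^{p^\ast}_{\perp,{\rm RM}}$ the map $T\mapsto(T,\varepsilon(\vartheta))$ is a continuous linear functional on $L^p$ (Hölder, since $\varepsilon(\vartheta)\in L^{p^\ast}$), so the defining condition is an intersection of closed affine hyperplanes, hence closed; convexity is immediate since the set is an affine translate of the linear subspace $\{T : (T,\varepsilon(\vartheta))=0 \text{ for all }\vartheta\in\Theta^{p^\ast}_{\perp,{\rm RM}}\}$. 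With $1<p\le 2$, the space $L^p(\Omega;\S^{d\times d})$ (a closed subspace of $L^p(\Omega;\R^{d\times d})$ with the natural norm) is uniformly convex by Clarkson's inequalities, hence reflexive and strictly convex. The standard theorem on projections onto closed convex subsets of a uniformly convex Banach space then yields a unique $S\in\widetilde{\Sigma}^{p,0}_{f,j,{\rm sym}}$ realizing the infimum distance to the fixed point $K(u_\Omega,y_\Omega)\in L^p(\Omega;\S^{d\times d})$, which is precisely \eqref{eq:best_approximation_S}. (Existence follows from reflexivity plus weak lower semicontinuity of the norm and a minimizing-sequence argument; uniqueness follows from strict convexity of the norm ball.)

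The only real subtlety—and the step I expect to need the most care—is the nonemptiness argument, specifically making sure the Hahn--Banach extension is legitimate: one must know that $\|\varepsilon(\cdot)\|_{L^{p^\ast}(\Omega)}$ is genuinely a norm on $\Theta^{p^\ast}_{\perp,{\rm RM}}$ (so that the functional is bounded with respect to the $L^{p^\ast}$-norm of $\varepsilon(\vartheta)$), and that the subspace $\{\varepsilon(\vartheta):\vartheta\in\Theta^{p^\ast}_{\perp,{\rm RM}}\}\subset L^{p^\ast}(\Omega;\S^{d\times d})$ is one to which Hahn--Banach applies—which it does, with no closedness needed. Here I would lean on the Korn and Poincaré inequalities already cited in Section~\ref{sec-shape_tensor}, together with the boundedness of $\div$ from $W^{1,p^\ast}$ to $L^{p^\ast}$ and the hypothesis $K(u_\Omega,y_\Omega)\in L^p$, which controls the $(K,\varepsilon(\vartheta))$ contribution and hence ultimately the whole functional. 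Everything else is routine functional analysis, entirely parallel to the argument behind \cite[Theorem~2.1]{Sta:24}, with $\nabla$ replaced by $\varepsilon$ and $\R^{d\times d}$ replaced by $\S^{d\times d}$ throughout.
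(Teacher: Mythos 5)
Your proposal is correct, and its overall skeleton coincides with the paper's: show $\widetilde{\Sigma}^{p,0}_{f,j,{\rm sym}}$ is a nonempty closed affine subset of $L^p(\Omega;\S^{d\times d})$, then get existence of the nearest point from reflexivity and uniqueness from strict convexity of $L^p$ for $p\in(1,2]$ (the paper cites \cite[Theorems 3.3.14 and 3.3.21]{AtkHan:09} for exactly these two facts). The only substantive step, and the one place you genuinely diverge, is nonemptiness. The paper constructs an explicit element $\bar S=\varepsilon(\psi)$ by solving the auxiliary pure-traction elasticity problem \eqref{eq:auxiliary_Neumann_problem} in $H^1(\Omega;\R^d)$; solvability of that Neumann-type problem is precisely where the compatibility condition $(f\,\nabla y_\Omega,\rho)=0$ for $\rho\in{\rm RM}(\R^d)$ enters, and $\bar S\in L^2\subseteq L^p$ since $p\le 2$. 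You instead treat $\vartheta\mapsto -(f\,\nabla y_\Omega,\vartheta)-(j(u_\Omega),\div\,\vartheta)$ as a bounded functional on the subspace $\varepsilon(\Theta^{p^\ast}_{\perp,{\rm RM}})\subset L^{p^\ast}(\Omega;\S^{d\times d})$ and extend by Hahn--Banach, representing the extension by some $T_0\in L^p(\Omega;\S^{d\times d})$; this is legitimate, since $\varepsilon$ is injective on $\Theta^{p^\ast}_{\perp,{\rm RM}}$ (its kernel ${\rm RM}(\R^d)$ is factored out by the constraints in \eqref{eq:deformation_orthogonal_RM}) and Korn plus Poincar\'e give boundedness with respect to $\|\varepsilon(\cdot)\|_{L^{p^\ast}(\Omega)}$. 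Two small remarks: first, your route does not in fact use the compatibility condition at all --- well-definedness and boundedness on $\Theta^{p^\ast}_{\perp,{\rm RM}}$ follow from Korn/Poincar\'e alone because rigid body modes are excluded from the test space --- so your attribution of nonemptiness to that hypothesis is slightly off, and your argument is marginally more general than the paper's; second, both routes silently require enough integrability of $f\,\nabla y_\Omega$ and $j(u_\Omega)$ to make the relevant functional bounded ($L^p$ for yours, roughly $L^2$ for the paper's $H^1$ problem), assumptions the paper only makes explicit later for $\Sigma^{p,0}_{f,j,{\rm sym}}$. What the paper's less slick, constructive route buys is a concrete admissible element $\bar S$ that is reused as a particular solution of the constraint in the proof of Theorem \ref{thrm-saddle_point_problem_p}.
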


\begin{proof}
  We start by showing that the admissible affine subspace $\widetilde{\Sigma}^{p,0}_{f,j,{\rm sym}}$ defined by (\ref{eq:constraint_set_p}) is not empty. To this end, write
  $\bar{S} = \varepsilon (\psi)$ where $\psi \in H^1 (\Omega ; \R^d)$ solves the variational problem
  \begin{equation}
      ( \varepsilon (\psi) , \varepsilon (\chi) ) = - ( f \: \nabla y_\Omega , \chi ) - ( j (u_\Omega) , \div \: \chi ) \mbox{ for all } \chi \in H^1 (\Omega;\R^d) \: .
    \label{eq:auxiliary_Neumann_problem}
  \end{equation}
  Due to the compatibility condition, (\ref{eq:auxiliary_Neumann_problem}) has a solution which is unique up to arbitrary rigid body modes $\rho \in {\rm RM} (\R^d)$.
  We see that $\bar{S} \in \widetilde{\Sigma}^{2,0}_{f,j,{\rm sym}} \subseteq \widetilde{\Sigma}^{p,0}_{f,j,{\rm sym}}$ for $1 < p \leq 2$.

  The existence of a best approximation satisfying (\ref{eq:best_approximation_S}) was already proved in the previous section and also follows from the fact that
  $\widetilde{\Sigma}^{p,0}_{f,j,{\rm RM}}$ is a closed affine subspace of the reflexive Banach space $L^p (\Omega ; \R^{d \times d})$, $p \in ( 1 , 2 ]$
  (see \cite[Theorem 3.3.14]{AtkHan:09}). Since, again for $p \in ( 1 , 2 ]$,
  $L^p (\Omega ; \R^{d \times d})$ is a strictly normed space, the best approximation is unique (see \cite[Theorem 3.3.21]{AtkHan:09}).
\end{proof}

We will now restrict our affine subspace of admissible shape tensor approximations to
\begin{equation}
  \begin{split}
    \Sigma_{f,j,{\rm sym}}^{p,0} & = \{ T \in L^p (\Omega;\S^{d \times d}) : \div \: T \in L^p (\Omega;\R^d) \: , \: \left. T \cdot n \right|_{\partial \Omega} = 0 \mbox{ and } \\
    & \hspace{1cm}
    ( \div \: T , \vartheta ) = ( f \: \nabla y_\Omega , \vartheta ) + ( j (u_\Omega) , \div \: \vartheta ) \mbox{ for all } \vartheta \in \Theta_{\perp,{\rm RM}}^{p^\ast} \} \: .
  \end{split}
  \label{eq:constraint_subspace_p}
\end{equation}
Through integration by parts, we see immediately that $\Sigma_{f,j,{\rm sym}}^{p,0} \subset \widetilde{\Sigma}_{f,j,{\rm sym}}^{p,0}$ holds. In order to make sure
that $\Sigma_{f,j,{\rm sym}}^{p,0}$ is not empty, we need stronger assumptions on $f$ and $j$. It suffices to assume $f \in L^{2 p /(2 - p)} (\Omega)$
($f \in L^\infty (\Omega)$ if $p = 2$) and $j (u_\Omega) \in L^p (\Omega)$. Under these conditions, Theorem \ref{theorem-unique_S} is also valid if
$\widetilde{\Sigma}^{p,0}_{f,j,{\rm sym}}$ is replaced by $\Sigma^{p,0}_{f,j,{\rm sym}}$ and we will consider this version of (\ref{eq:best_approximation_S}) for
the rest of this paper. The determination of the minimum in (\ref{eq:best_approximation_S}) constitutes a convex minimization problem in the space
\begin{equation}
  \begin{split}
    \Sigma^{p,0} & = \{ T \in L^p (\Omega;\R^{d \times d}) : \div \: T \in L^p (\Omega;\R^d) \mbox{ and }
    \langle \left. T \cdot n \right|_{\partial \Omega} , \left. \chi \right|_{\partial \Omega} \rangle = 0 \\
    & \hspace{4cm} \mbox{ for all } \chi \in W^{1,p^\ast} (\Omega ; \R^d) \}
  \end{split}
\end{equation}
subject to the divergence constraint appearing in (\ref{eq:constraint_subspace_p}) and the additional constraint that $T$ be symmetric. For the symmetry constraint,
the space
\begin{equation}
  \Xi^p = \{ \rho \in L^p (\Omega;\R^{d \times d}) : \rho + \rho^T = 0 \: , \: ( \rho , E ) = 0 \mbox{ for all constant } E \in \R^{d \times d} \}
  \label{eq:rotation_space}
\end{equation}
will be useful. We will now show that the corresponding KKT conditions for the solution $S \in \Sigma^{p,0}$ of the minimization problem are given by
\begin{equation}
  \begin{split}
    ( | S - K (u_\Omega,y_\Omega) |^{p-2} (S - K (u_\Omega,y_\Omega)) , T ) + ( \div \: T , \theta ) + ( \as \: T , \omega ) & = 0 \: , \\
    ( \div \: S , \chi ) - ( f \: \nabla y_\Omega , \chi ) - ( j (u_\Omega) , \div \: \chi ) & = 0 \: , \\
    ( \as \: S , \gamma ) & = 0
  \end{split}
  \label{eq:KKT}
\end{equation}
for all $T \in \Sigma^{p,0}$, $\chi \in \Theta^{p^\ast}_{\perp,{\rm RM}}$ and $\gamma \in \Xi^{p^\ast}$. Here, $\theta \in \Theta^{p^\ast}_{\perp,{\rm RM}}$ and
$\omega \in \Xi^{p^\ast}$ are Lagrange multipliers associated with the divergence and symmetry constraints, respectively.

\begin{remark}
  In the proof of Theorem \ref{thrm-saddle_point_problem_p} below, we will make use of a Helmholtz decomposition of the form
  \begin{equation}
    L^{p^\ast} (\Omega;\R^{d \times d}) = \nabla W^{1,p^\ast} (\Omega;\R^d) \oplus H_{p^\ast} (\Omega;\R^{d \times d}) \: ,
    \label{eq:Helmholtz}
  \end{equation}
  where $H_{p^\ast} (\Omega;\R^{d \times d})$ denotes the subspace of functions where the divergence and the boundary trace vanishes. The validity
  of a Helmholtz decomposition (\ref{eq:Helmholtz}) depends on the validity of a certain Neumann boundary value problem for the Poisson equation
  (see \cite[Sect. 11]{FabMenMit:98}). For general Lipschitz domains this is guaranteed for $3/2 \leq p^\ast \leq 3$ while it is known to hold for all
  $p^\ast \in (1,\infty)$ in the case of a domain with $C^2$ boundary (cf. \cite{FujMor:77}, \cite[Theorem III.3.3]{Gal:11}).
  \label{rmrk-Helmholtz}
\end{remark}

\begin{theorem}
  Let $p \in ( 1 , 2 ]$ be such that $L^{p^\ast} (\Omega;\R^{d \times d})$ possesses a Helmholtz decomposition as stated in Remark \ref{rmrk-Helmholtz}
  and assume that $K (u_\Omega,y_\Omega) \in L^p (\Omega;\S^{d \times d})$ as well as
  $f \in L^\infty (\Omega)$ and $j (u_\Omega) \in L^2 (\Omega)$ hold. Moreover, assume that the compatibility condition
  $( f \: \nabla y_\Omega , \rho ) = 0$ for all $\rho \in {\rm RM} (\R^d)$ is fulfilled. Then, (\ref{eq:KKT}) has a unique solution
  \[
    (S,\theta,\omega) \in \Sigma^{p,0} \times \Theta_{\perp,{\rm RM}}^{p^\ast} \times \Xi^{p^\ast} \;\;\; ( 1/p + 1/p^\ast = 1 ) \: . 
  \]
  $S \in \Sigma^{p,0}$ is the unique minimizer of (\ref{eq:best_approximation_S}).
  \label{thrm-saddle_point_problem_p}
\end{theorem}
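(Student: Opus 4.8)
The plan is to read (\ref{eq:KKT}) as the first-order optimality system of the convex minimization (\ref{eq:best_approximation_S}) --- in its version over $\Sigma^{p,0}_{f,j,{\rm sym}}$ --- rewritten as a minimization over the larger space $\Sigma^{p,0}$ subject to two linear constraints: the divergence constraint occurring in (\ref{eq:constraint_subspace_p}) and the symmetry constraint $\as T = 0$, with $\theta$ and $\omega$ the associated Lagrange multipliers. Set $\Phi (T) = \tfrac{1}{p} \| T - K ( u_\Omega , y_\Omega ) \|_{L^p (\Omega)}^p$; then $\Phi$ is convex and G\^ateaux differentiable on $L^p (\Omega ; \R^{d \times d})$ with $\Phi^\prime (T) = | T - K ( u_\Omega , y_\Omega ) |^{p-2} ( T - K ( u_\Omega , y_\Omega ) ) \in L^{p^\ast} (\Omega ; \R^{d \times d})$ (using $(p-1) p^\ast = p$), and $T \mapsto \Phi^\prime (T)$ is strictly monotone. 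With the bounded constraint form
\[
  b ( T , ( \chi , \gamma ) ) = ( \div T , \chi ) + ( \as T , \gamma ) \: , \qquad
  T \in \Sigma^{p,0} \: , \quad ( \chi , \gamma ) \in \Theta^{p^\ast}_{\perp,{\rm RM}} \times \Xi^{p^\ast} \: ,
\]
the system (\ref{eq:KKT}) reads $\Phi^\prime (S) + b ( \cdot , ( \theta , \omega ) ) = 0$ on $\Sigma^{p,0}$ together with the feasibility relations $b ( S , ( \chi , 0 ) ) = ( f \: \nabla y_\Omega , \chi ) + ( j ( u_\Omega) , \div \chi )$ and $b ( S , ( 0 , \gamma ) ) = 0$. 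So the proof has four ingredients: (a) well-posedness of the minimization, (b) existence of the multipliers, (c) uniqueness of the triple, and (d) identification of the $S$-component with the minimizer in (\ref{eq:best_approximation_S}).

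For (a) I would invoke Theorem \ref{theorem-unique_S} applied to $\Sigma^{p,0}_{f,j,{\rm sym}}$: this is a closed affine subspace of the reflexive, strictly convex space $L^p (\Omega ; \R^{d \times d})$ with $p \in (1,2]$, and it is nonempty under the present (stronger) hypotheses on $f$ and $j$ --- a competitor being produced by the same Neumann-type construction as in the proof of Theorem \ref{theorem-unique_S}, carried out in the Helmholtz-regularity range so that its divergence lies in $L^p (\Omega ; \R^d)$ and its normal trace vanishes. Hence $\Phi$ has a unique minimizer $S$ over $\Sigma^{p,0}_{f,j,{\rm sym}}$, and $\Phi^\prime (S)$ annihilates the kernel of the constraint operator $B : \Sigma^{p,0} \rightarrow ( \Theta^{p^\ast}_{\perp,{\rm RM}} )^\ast \times ( \Xi^{p^\ast} )^\ast$, $BT = ( ( \div T , \cdot ) , ( \as T , \cdot ) )$. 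Thus (b) follows once $B$ has closed range (then $\Phi^\prime (S) \in {\rm range} \, B^\ast$, which yields $( \theta , \omega )$), and the multiplier part of (c) follows from injectivity of $B^\ast$; both of these are consequences of one inf-sup (LBB) condition for $b$: there is $\beta > 0$ with
\[
  \inf_{ 0 \neq ( \chi , \gamma ) } \; \sup_{ 0 \neq T \in \Sigma^{p,0} } \;
  \frac{ b ( T , ( \chi , \gamma ) ) }{ \left( \| T \|_{L^p (\Omega)} + \| \div T \|_{L^p (\Omega)} \right) \left( \| \chi \|_{W^{1,p^\ast} (\Omega)} + \| \gamma \|_{L^{p^\ast} (\Omega)} \right) } \geq \beta \: .
\]

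Establishing this inf-sup condition is, in my view, the main obstacle, and it is precisely where the Helmholtz decomposition of Remark \ref{rmrk-Helmholtz} enters (and where the restriction on $p^\ast$ comes from). Integrating by parts in the first term of $b$ and using $\left. T \cdot n \right|_{\partial\Omega} = 0$, one finds $b ( T , ( \chi , \gamma ) ) = ( T , - \nabla \chi + \gamma )$ up to the fixed constant normalizing $\as$ (with $\gamma$ viewed as a skew-symmetric matrix field); since the symmetric part of $- \nabla \chi + \gamma$ equals $- \varepsilon ( \chi )$, Korn's and Poincare's inequalities on $\Theta^{p^\ast}_{\perp,{\rm RM}}$ give $\| - \nabla \chi + \gamma \|_{L^{p^\ast} (\Omega)} \geq c \left( \| \chi \|_{W^{1,p^\ast} (\Omega)} + \| \gamma \|_{L^{p^\ast} (\Omega)} \right)$. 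It then remains to produce, for each $( \chi , \gamma )$, an admissible test tensor $T \in \Sigma^{p,0}$ realizing the pairing against $- \nabla \chi + \gamma$ at a comparable cost in the $\Sigma^{p,0}$-norm; the natural route is to split $- \nabla \chi + \gamma$ according to (\ref{eq:Helmholtz}), to reach its gradient part through $\div T$ by solving an auxiliary Neumann-type problem (for the Poisson, resp. the linear elasticity, operator) that is well posed exactly in the Helmholtz-regularity range of $p^\ast$, to reach its $H_{p^\ast} (\Omega ; \R^{d \times d})$-component through the antisymmetric part of $T$, and to restore $\left. T \cdot n \right|_{\partial\Omega} = 0$ by a further correction in that same range. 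Excluding the rigid body modes from $\Theta^{p^\ast}_{\perp,{\rm RM}}$ and the constant matrices from $\Xi^{p^\ast}$ is indispensable here: otherwise testing the divergence constraint against rotations and the symmetry constraint against constant skew matrices would yield the same conditions, $B$ would lose dense range, and the inf-sup would fail.

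For (d), test the first line of (\ref{eq:KKT}) with tensors in $C_0^\infty (\Omega ; \R^{d \times d}) \subset \Sigma^{p,0}$ to obtain the pointwise identity $| S - K ( u_\Omega , y_\Omega ) |^{p-2} ( S - K ( u_\Omega , y_\Omega ) ) = \nabla \theta - \omega$ in $L^{p^\ast} (\Omega ; \R^{d \times d})$ (again up to the $\as$-normalization). The third line of (\ref{eq:KKT}) forces $\as S$ only to be a constant skew matrix; taking the skew part of this identity, integrating over $\Omega$, and using $\int_\Omega \omega \, dx = 0$ (the mean-zero normalization built into $\Xi^{p^\ast}$) together with $\int_\Omega {\rm skew} ( \nabla \theta ) \, dx = 0$ (the rigid-body orthogonality built into $\Theta^{p^\ast}_{\perp,{\rm RM}}$) forces that constant to vanish, so $\as S = 0$ and, by the second line, $S \in \Sigma^{p,0}_{f,j,{\rm sym}}$. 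Since $S$ is then feasible and $\Phi^\prime (S)$ annihilates the constraint kernel, convexity gives $\Phi ( S ) \leq \Phi ( T )$ for every feasible $T$; hence $S$ is the unique minimizer of (\ref{eq:best_approximation_S}), any solution of (\ref{eq:KKT}) has the same $S$-component, and $( \theta , \omega )$ is then pinned down by $B^\ast ( \theta , \omega ) = - \Phi^\prime (S)$ and the injectivity of $B^\ast$, which completes (c) and the proof.
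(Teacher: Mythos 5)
Your overall strategy---reading (\ref{eq:KKT}) as the KKT system of the constrained convex minimization and producing $(\theta,\omega)$ by a closed-range argument for the constraint operator $B$---matches the paper's in spirit, and your parts (a) and (d) are essentially sound. The gap is in (b): the inf-sup inequality on which you hang everything is false as stated. Take $\gamma = 0$; then $b(T,(\chi,0)) = (\div\:T,\chi) \leq \|\div\:T\|_{L^p(\Omega)}\,\|\chi\|_{L^{p^\ast}(\Omega)}$, so your quotient is bounded above by $\|\chi\|_{L^{p^\ast}(\Omega)}/\|\chi\|_{W^{1,p^\ast}(\Omega)}$, which tends to zero along oscillatory sequences in $\Theta^{p^\ast}_{\perp,{\rm RM}}$. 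Your own reduction shows why: after integration by parts the pairing sees $G = -\nabla\chi+\gamma$ only through $\div\:T$ and $\as\:T$, and when $G$ is a pure gradient with small $L^{p^\ast}$-norm of $\chi$ no admissible $T$ can realize $\|G\|_{L^{p^\ast}(\Omega)}$ at unit cost in the graph norm. This is not cosmetic: with the multiplier space normed by $W^{1,p^\ast}\times L^{p^\ast}$ the range of $B$ is not closed (the image of $L^p$ in the dual of $W^{1,p^\ast}$ is not closed), so ``$\Phi^\prime(S)$ annihilates $\ker B$'' does not abstractly yield a multiplier in that space. If you repair the inf-sup by norming $\chi$ in $L^{p^\ast}$, the argument only delivers $\theta\in L^{p^\ast}(\Omega;\R^d)$, and the assertion $\theta\in\Theta^{p^\ast}_{\perp,{\rm RM}}\subset W^{1,p^\ast}(\Omega;\R^d)$ becomes an unproven regularity claim. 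The paper settles exactly this point with the Helmholtz decomposition of Remark \ref{rmrk-Helmholtz}: once the symmetry multiplier $\widetilde{\omega}$ is in hand, the functional $|S-K(u_\Omega,y_\Omega)|^{p-2}(S-K(u_\Omega,y_\Omega))+\widetilde{\omega}$ annihilates $\Sigma^{p,0}_{\rm sol}$ and is therefore of the form $\nabla\tilde{\theta}$ with $\tilde{\theta}\in W^{1,p^\ast}(\Omega;\R^d)$, which is then normalized against the rigid body modes. You have relocated the Helmholtz decomposition into the inf-sup proof, where it cannot do this job.

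The second missing ingredient is the multiplier for the symmetry constraint itself. The paper's Lemma \ref{lmm-as_surjective}---surjectivity of $\as:\Sigma^{p,0}_{\rm sol}\rightarrow\Xi^p$, proved by the explicit curl-potential construction $T=\curl\:\psi$ with $\psi=\psi^{(1)}+\tfrac12(\tr\:\psi^{(1)})\,I$---is what licenses the Lagrange multiplier theorem for the constraint $\as\:T^\circ=0$ on the solenoidal subspace. Your sketch offers no substitute: the step ``reach the $H_{p^\ast}$-component through the antisymmetric part of $T$'' does not correspond to a construction (the divergence-free component of $-\nabla\chi+\gamma$ in the decomposition (\ref{eq:Helmholtz}) has no a priori relation to skew-symmetry), so the existence of $\omega$ is simply not established. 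In short, the architecture of your proof is compatible with the paper's, but the two load-bearing technical results---the surjectivity of the weak-symmetry constraint map and the Helmholtz-based identification of the divergence multiplier as a gradient---are both absent, and the single inf-sup inequality you put in their place is not true.
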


Before we can start with the proof, some preparatory results are needed. To this end, we will use the following subspaces of $\Sigma^{p,0}$:
\begin{equation}
  \begin{split}
    \Sigma^{p,0}_{\rm sol} & := \{ T \in \Sigma^{p,0} : \div \: T = 0 \} \: , \\
    \Sigma^{p,0}_{\rm sol,{\rm RM}} & := \{ T \in \Sigma^{p,0} : ( \div \: T , \rho ) = 0 \mbox{ for all } \rho \in \mbox{RM} (\R^d) \} \: .
  \end{split}
  \label{eq:solenoidal_subspaces}
\end{equation}
Obviously, $\Sigma^{p,0}_{\rm sol} \subset \Sigma^{p,0}_{\rm sol,{\rm RM}}$ and, under the compatibility condition in Theorem 3.1, $S \in \Sigma^{p,0}_{\rm sol,RM}$
holds due to the divergence constraint in (\ref{eq:KKT}).

\begin{lemma}
  For all $T \in \Sigma^{p,0}_{\rm sol,RM}$, $\as \: T \in \Xi^p$ holds.
\end{lemma}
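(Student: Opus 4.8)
The goal is to show that for $T \in \Sigma^{p,0}_{\rm sol,RM}$, the antisymmetric part $\as T := (T - T^T)/2$ lies in $\Xi^p$, i.e., that $\as T$ is $L^p(\Omega)$-orthogonal to every constant matrix. Since $\as T$ is already antisymmetric, it is automatically orthogonal to every constant \emph{symmetric} matrix, so the content of the claim is orthogonality to constant \emph{antisymmetric} matrices $E = -E^T$. I would therefore fix an arbitrary constant antisymmetric $E \in \R^{d\times d}$ and show $(\as T, E) = (T, E) = 0$. (The identity $(\as T, E) = (T,E)$ for antisymmetric $E$ is the elementary fact that the symmetric part of $T$ pairs to zero with $E$.)

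The key idea is to represent the constant antisymmetric matrix $E$ as $\nabla\rho$ for a suitable rigid body mode. Writing $\rho(x) = Ex$, we have $\nabla\rho = E$ constant and $\nabla\rho + (\nabla\rho)^T = E + E^T = 0$, so $\rho \in {\rm RM}(\R^d)$; moreover $\varepsilon(\rho) = 0$. Then $(T,E) = (T,\nabla\rho)$. Now I integrate by parts: since $T \in \Sigma^{p,0}$ we have $\div T \in L^p(\Omega;\R^d)$ and $T\cdot n|_{\partial\Omega}$ pairs to zero against traces of $W^{1,p^\ast}$ functions (this is exactly how $\Sigma^{p,0}$ is defined via the boundary condition), so
\[
  (T, \nabla\rho) = -(\div T, \rho) + \langle T\cdot n|_{\partial\Omega}, \rho|_{\partial\Omega}\rangle = -(\div T, \rho) \: .
\]
The boundary term vanishes because $\rho$, being a polynomial of degree one, lies in $W^{1,p^\ast}(\Omega;\R^d)$. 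Finally, since $\rho \in {\rm RM}(\R^d)$ and $T \in \Sigma^{p,0}_{\rm sol,RM}$, the defining property of that subspace gives $(\div T, \rho) = 0$. Chaining these equalities yields $(\as T, E) = (T,E) = (T,\nabla\rho) = -(\div T,\rho) = 0$, and since $E$ was an arbitrary constant antisymmetric matrix (and constant symmetric matrices pair to zero trivially), we conclude $(\as T, E) = 0$ for all constant $E \in \R^{d\times d}$, i.e., $\as T \in \Xi^p$. That $\as T \in L^p(\Omega;\R^{d\times d})$ is immediate from $T \in L^p$.

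There is essentially no hard obstacle here: the only point requiring a moment's care is making sure the integration-by-parts formula is the one built into the definition of $\Sigma^{p,0}$ — i.e., that the boundary pairing $\langle T\cdot n|_{\partial\Omega}, \chi|_{\partial\Omega}\rangle$ is interpreted in the appropriate $W^{-1/p,p}\!/\!W^{1-1/p,p^\ast}$ duality sense, valid for $T$ with $L^p$ divergence and any $\chi \in W^{1,p^\ast}(\Omega;\R^d)$, and that $\rho(x) = Ex$ indeed qualifies as such a $\chi$ on the bounded Lipschitz domain $\Omega$. Since $\rho$ is smooth and $\Omega$ is bounded, $\rho \in W^{1,\infty}(\Omega;\R^d) \subset W^{1,p^\ast}(\Omega;\R^d)$, so everything is legitimate, and the proof is a short three-line computation.
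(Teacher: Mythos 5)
Your proof is correct and follows essentially the same route as the paper's: represent each constant skew-symmetric $E$ as $\nabla\rho$ for a rigid body rotation $\rho(x)=Ex$, integrate by parts using the vanishing normal trace built into $\Sigma^{p,0}$ and the condition $(\div\: T,\rho)=0$ defining $\Sigma^{p,0}_{\rm sol,RM}$, and note that constant symmetric matrices pair to zero with $\as\: T$ trivially. The only difference is cosmetic (the paper writes out an explicit basis of the rotational rigid body modes in $\R^3$, while you work with a general antisymmetric $E$ directly), and your added remark on the duality interpretation of the boundary pairing is a welcome clarification.
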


\begin{proof}
  For all
  \begin{equation}
    \rho \in \mbox{span} \{ \begin{pmatrix} \;\;\;x_2 \\ - x_1 \\ \;0 \end{pmatrix} , \begin{pmatrix} \;0 \\ \;\;\;x_3 \\ - x_2 \end{pmatrix} ,
                                        \begin{pmatrix} - x_3 \\ \;0 \\ \;\;\;x_1 \end{pmatrix} \} \subset {\rm RM} (\R^d) \: ,
  \end{equation}
  integration by parts implies $0 = - ( \div \: S , \rho ) = ( S , \nabla \rho ) = ( \as \: S , \nabla \rho )$. Since all constant skew-symmetric $E \in \R^{d \times d}$ can
  be expressed as $E = \nabla \rho$ and (\ref{eq:as_average_zero}), we obtain
  \begin{equation}
    ( \as \: S , E ) = 0 \mbox{ for all constant skew-symmetric } E \in \R^{d \times d} \: .
    \label{eq:as_average_zero}
  \end{equation}
  Moreover, (\ref{eq:as_average_zero}) holds trivially for all symmetric $E \in \R^{d \times d}$ which completes the proof.
\end{proof}

A crucial ingredient in the proof of Theorem \ref{thrm-saddle_point_problem_p} is the surjectivity of the mapping
\[
  \as : \Sigma^{p,0}_{\rm sol} \rightarrow \Xi^p \: .
\]
For $p = 2$, this result is part of the proof of \cite[Proposition 9.3.2]{BofBreFor:13}. We give a self-contained proof for the result in the general case $p \in ( 1 , \infty )$
which we state as follows.

\begin{lemma}
  For each $\xi \in \Xi^p$, $1 < p < \infty$, there is a $T \in \Sigma^{p,0}_{\rm sol}$ with $\as \: T = \xi$.   
  \label{lmm-as_surjective}
\end{lemma}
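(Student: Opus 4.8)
The plan is to realize $\xi$ as the skew part of an $L^p$ matrix field that is already solenoidal with vanishing normal trace, by using the ansatz $T = \xi + \sigma$ with a \emph{symmetric} $\sigma \in L^p (\Omega ; \S^{d \times d})$. With this ansatz $\as \: T = \as \: \sigma + \as \: \xi = \xi$ holds automatically, so the only thing to arrange is $T \in \Sigma^{p,0}_{\rm sol}$, i.e.\ $\div \: T = 0$ together with $\langle T \cdot n |_{\partial \Omega} , \chi |_{\partial \Omega} \rangle = 0$ for all $\chi \in W^{1,p^\ast} (\Omega ; \R^d)$. Both requirements reduce, via the integration-by-parts definition of the normal trace, to the single condition $( T , \nabla \chi ) = 0$ for all $\chi \in W^{1,p^\ast} (\Omega ; \R^d)$; and since $( \sigma , \nabla \chi ) = ( \sigma , \varepsilon (\chi) )$ for symmetric $\sigma$, the lemma is equivalent to the existence of $\sigma \in L^p (\Omega ; \S^{d \times d})$ with
\[
  ( \sigma , \varepsilon (\chi) ) = - ( \xi , \nabla \chi ) \quad \mbox{for all } \chi \in W^{1,p^\ast} (\Omega ; \R^d) \: .
\]

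To produce such a $\sigma$ I would argue by duality. The right-hand side $F (\chi) := - ( \xi , \nabla \chi )$ is a bounded linear functional on $W^{1,p^\ast} (\Omega ; \R^d)$ with $| F (\chi) | \leq \| \xi \|_{L^p (\Omega)} \| \nabla \chi \|_{L^{p^\ast} (\Omega)}$. The key observation is that $F$ annihilates the rigid body modes: for $\rho \in {\rm RM} (\R^d)$ the matrix $\nabla \rho$ is a constant skew matrix, hence $F (\rho) = - \left( \int_\Omega \xi \: dx \right) : \nabla \rho = 0$ because $\xi \in \Xi^p$ has vanishing mean. Therefore $F$ is determined by its restriction to $\Theta^{p^\ast}_{\perp,{\rm RM}}$, and on that space Korn's and Poincar\'e's inequalities make $\| \varepsilon (\: \cdot \:) \|_{L^{p^\ast} (\Omega)}$ a norm equivalent to $\| \: \cdot \: \|_{W^{1,p^\ast} (\Omega)}$; consequently $\varepsilon$ is an isomorphism of $\Theta^{p^\ast}_{\perp,{\rm RM}}$ onto a closed subspace $\mathcal{E}$ of $L^{p^\ast} (\Omega ; \S^{d \times d})$. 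Transporting $F$ through this isomorphism gives a bounded functional on $\mathcal{E}$, which the Hahn--Banach theorem extends to all of $L^{p^\ast} (\Omega ; \S^{d \times d})$; since $1 < p^\ast < \infty$, that extension is represented by some $\sigma \in L^p (\Omega ; \S^{d \times d})$, and by construction $( \sigma , \varepsilon (\chi) ) = F (\chi)$ first for $\chi \in \Theta^{p^\ast}_{\perp,{\rm RM}}$ and then, splitting off the rigid body part and using $F |_{{\rm RM}} = 0$ together with $( \sigma , \varepsilon (\rho) ) = 0$, for all $\chi \in W^{1,p^\ast} (\Omega ; \R^d)$. (Alternatively, and more in the spirit of the rest of the paper, one may take $\sigma = | \varepsilon (w) |^{p^\ast - 2} \varepsilon (w)$ where $w \in \Theta^{p^\ast}_{\perp,{\rm RM}}$ solves the $p^\ast$-Laplace-type problem $( | \varepsilon (w) |^{p^\ast - 2} \varepsilon (w) , \varepsilon (\chi) ) = F (\chi)$.)

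It then remains only to read off the conclusion: $T = \xi + \sigma \in L^p (\Omega ; \R^{d \times d})$ satisfies $( T , \nabla \chi ) = ( \xi , \nabla \chi ) + ( \sigma , \varepsilon (\chi) ) = ( \xi , \nabla \chi ) + F (\chi) = 0$ for all $\chi \in W^{1,p^\ast} (\Omega ; \R^d)$, whence $\div \: T = 0 \in L^p (\Omega ; \R^d)$ and $\langle T \cdot n |_{\partial \Omega} , \chi |_{\partial \Omega} \rangle = ( \div \: T , \chi ) + ( T , \nabla \chi ) = 0$, so $T \in \Sigma^{p,0}_{\rm sol}$, while $\as \: T = \xi$. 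I do not expect a genuine obstacle; in particular, unlike the companion statements of this section, neither the Helmholtz decomposition of Remark \ref{rmrk-Helmholtz} nor the restriction $p \leq 2$ enters. The one point requiring care is the closed-range step: one must invoke Korn's inequality in $W^{1,p^\ast}$ for general $p^\ast \in (1,\infty)$ (as already used in Section \ref{sec-shape_tensor}) to ensure that $\mathcal{E}$ is closed and the transported functional bounded, so that Hahn--Banach applies. This is also exactly where the argument departs from, and simplifies, the $p = 2$ construction underlying \cite[Prop.\ 9.3.2]{BofBreFor:13}, which goes through an explicit matrix potential.
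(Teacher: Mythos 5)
Your proof is correct, but it takes a genuinely different route from the paper's. The paper argues constructively: it seeks $T = \curl \: \psi$ with a matrix potential $\psi = \psi^{(1)} + \psi^{(2)}$, obtains $\psi^{(1)} \in W_0^{1,p}(\Omega;\R^{d \times d})$ from a Bogovskii-type right inverse of the divergence (\cite[Theorem III.3.3]{Gal:11}; this is where the zero-mean condition on $\xi$ enters, as the compatibility condition for the divergence equation), verifies by an explicit, dimension-dependent computation that $\as ( \curl \: \psi^{(1)} )$ equals $\xi$ up to terms involving $\tr \: \psi^{(1)}$, and cancels those terms with $\psi^{(2)} = \frac{1}{2} (\tr \: \psi^{(1)}) \, I$. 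You instead make the ansatz $T = \xi + \sigma$ with $\sigma$ symmetric, reduce membership in $\Sigma^{p,0}_{\rm sol}$ to the single identity $( \sigma , \varepsilon (\chi) ) = - ( \xi , \nabla \chi )$ for all $\chi \in W^{1,p^\ast} (\Omega;\R^d)$, use the vanishing mean of $\xi$ to annihilate the rigid body modes, and solve by Korn's inequality plus Hahn--Banach (or, constructively, by the $p^\ast$-Laplacian-type problem you mention). Each approach buys something: the paper's yields an explicit solenoidal potential and needs only the solvability of $\div \: v = g$ in $W_0^{1,p}$, at the cost of a dimension-specific curl computation; yours is dimension-independent and conceptually shorter, at the cost of being non-constructive in its Hahn--Banach form and of requiring Korn's inequality in $L^{p^\ast}$ for every $p^\ast \in (1,\infty)$ (the paper invokes it only for $p^\ast \geq 2$, but the general-exponent version holds on Lipschitz domains, so this is a presentational rather than a mathematical issue). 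Both proofs correctly avoid the Helmholtz decomposition and the restriction $p \leq 2$, and both use the two defining properties of $\Xi^p$ --- skew-symmetry and vanishing mean --- in parallel places: the former to get $\as \: T = \xi$ (respectively to read off the skew part of $\curl \: \psi$), the latter to satisfy the compatibility condition of the auxiliary problem.
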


\begin{proof}
  We present the proof for three dimensions; the case $d = 2$ is similar but less technical. Given $\xi \in \Xi^p$, we construct $T \in \Sigma^{p,0}_{\rm sol}$
  in the form
  \begin{equation}
    T = \curl \: \psi \mbox{ with } \psi \in W^{1,p} (\Omega;\R^{d \times d}) \: , \: \left. \curl \: \psi \cdot n \right|_{\partial \Omega} = 0 \: .
    \label{eq:sigma_as_curl}
  \end{equation}
  $\psi$ will be made up of two parts $\psi = \psi^{(1)} + \psi^{(2)}$. We start with $\psi^{(1)} \in W_0^{1,p} (\Omega ; \R^{d \times d})$ such that
  \begin{equation}
    \div \: (\psi^{(1)})^T + 2 \begin{pmatrix} \xi_{23} \\ \xi_{31} \\ \xi_{12} \end{pmatrix} = 0 \: .
  \end{equation}
  This is possible due to \cite[Theorem III.3.3]{Gal:11}. We have
  \begin{equation}
    \begin{split}
      \as & ( \curl \: \psi^{(1)} ) = \as \begin{pmatrix}
        \partial_2 \psi_{13}^{(1)} - \partial_3 \psi_{12}^{(1)}  & \partial_3 \psi_{11}^{(1)} - \partial_1 \psi_{13}^{(1)}  & \partial_1 \psi_{12}^{(1)} - \partial_2 \psi_{11}^{(1)} \\
        \partial_2 \psi_{23}^{(1)} - \partial_3 \psi_{22}^{(1)}  & \partial_3 \psi_{21}^{(1)} - \partial_1 \psi_{23}^{(1)}  & \partial_1 \psi_{22}^{(1)} - \partial_2 \psi_{21}^{(1)} \\
        \partial_2 \psi_{33}^{(1)} - \partial_3 \psi_{32}^{(1)}  & \partial_3 \psi_{31}^{(1)} - \partial_1 \psi_{33}^{(1)}  & \partial_1 \psi_{32}^{(1)} - \partial_2 \psi_{31}^{(1)}
      \end{pmatrix} \\
      = & \frac{1}{2} \left( \begin{pmatrix}
        0 & \partial_3 (\psi_{11}^{(1)} + \psi_{22}^{(1)}) & \mbox{antisymm.} \\
        \mbox{antisymm.} & 0 & \partial_1 (\psi_{22}^{(1)} + \psi_{33}^{(1)}) \\
        \partial_2 (\psi_{11}^{(1)} + \psi_{33}^{(1)}) & \mbox{antisymm.} & 0
      \end{pmatrix} \right. \\
      & \;\;\;\;\;\; \left. - \begin{pmatrix}
        0 & \partial_1 \psi_{13}^{(1)} + \partial_2 \psi_{23}^{(1)} & \mbox{antisymm.} \\
        \mbox{antisymm.} & 0 & \partial_2 \psi_{21}^{(1)} + \partial_3 \psi_{31}^{(1)} \\
        \partial_1 \psi_{12}^{(1)} + \partial_3 \psi_{32}^{(1)} & \mbox{antisymm.} & 0
      \end{pmatrix} \right) \\
      = & \frac{1}{2} \begin{pmatrix}
        0 & \partial_3 (\tr \: \psi^{(1)}) - \div \: \psi_{\cdot,3}^{(1)} & \div \: \psi_{\cdot,2}^{(1)} - \partial_2 (\tr \: \psi^{(1)}) \\
        \div \: \psi_{\cdot,3}^{(1)} - \partial_3 (\tr \: \psi^{(1)}) & 0 & \partial_1 (\tr \: \psi^{(1)}) - \div \: \psi_{\cdot,1}^{(1)} \\
        \partial_2 (\tr \: \psi^{(1)}) - \div \: \psi_{\cdot,2}^{(1)} & \div \: \psi_{\cdot,1}^{(1)} - \partial_1 (\tr \: \psi^{(1)}) & 0
      \end{pmatrix} \\
      = & \frac{1}{2} \begin{pmatrix}
        0 & \;\partial_3 (\tr \: \psi^{(1)}) & \!- \partial_2 (\tr \: \psi^{(1)}) \\
        \!- \partial_3 (\tr \: \psi^{(1)}) & 0 & \;\partial_1 (\tr \: \psi^{(1)}) \\
        \;\partial_2 (\tr \: \psi^{(1)}) & \!- \partial_1 (\tr \: \psi^{(1)}) & 0
      \end{pmatrix}
      + \begin{pmatrix}
        0 & \;\xi_{12} & \!- \xi_{31} \\
        \!- \xi_{12} & 0 & \;\xi_{23} \\
        \;\xi_{31} & \!- \xi_{23} & 0
      \end{pmatrix} ,
    \end{split}
  \end{equation}
  which means that $\psi^{(2)} \in W^{1,p} (\Omega;\R^{d \times d})$ needs to satisfy
  \begin{equation}
    \begin{split}
      \as ( \curl \: \psi^{(2)} ) + \frac{1}{2} \begin{pmatrix}
        0 & \;\;\partial_3 (\tr \: \psi^{(1)}) & - \partial_2 (\tr \: \psi^{(1)}) \\
        - \partial_3 (\tr \: \psi^{(1)}) & 0 & \;\;\partial_1 (\tr \: \psi^{(1)}) \\
        \;\;\partial_2 (\tr \: \psi^{(1)}) & - \partial_1 (\tr \: \psi^{(1)}) & 0
      \end{pmatrix} & = 0 \\
      \mbox{ and } \left. \curl \: \psi^{(2)} \cdot n \right|_{\partial \Omega} & = 0 \: .
    \end{split}
  \end{equation}
  Obviously, $\psi^{(2)} = \frac{1}{2} (\tr \: \psi^{(1)}) \: I$ does this job.
\end{proof}

{\em Proof of Theorem 3.3.}
From Theorem \ref{theorem-unique_S} we deduce that there is a unique $S \in \Sigma^{p,0}$ which minimizes (\ref{eq:best_approximation_S}).
Let $\bar{S} \in \Sigma^{p,0}$ be such that the constraints in (\ref{eq:best_approximation_S}) are satisfied (like the one constructed in the proof of Theorem
\ref{theorem-unique_S}). The minimization problem can then be written as finding $S = \bar{S} + S^\circ$ such that $S^\circ \in \Sigma^{p,0}_{\rm sol}$ solves
\begin{equation}
  \begin{split}
    \| \bar{S} + T^\circ - K ( u_\Omega , y_\Omega ) \|_{L^p (\Omega)} \rightarrow \min! \;\; & \mbox{ among all } \;\; T^\circ \in \Sigma^{p,0}_{\rm sol} \\
    & \mbox{ which satisfy } \;\; \as \: T^\circ = 0 \: .
  \end{split}
  \label{eq:constraint_as}
\end{equation}
Due to the surjectivity, shown in Lemma \ref{lmm-as_surjective}, the theory of Lagrange multipliers in constrained optimization (see e.g. \cite[Section 1.3]{ItoKun:08})
implies the existence of an $\widetilde{\omega} \in L^{p^\ast} (\Omega;\R^{d \times d})$, satisfying $\widetilde{\omega} + \widetilde{\omega}^T = 0$, such that
\begin{equation}
  ( | S - K (u_\Omega,y_\Omega) |^{p-2} (S - K (u_\Omega,y_\Omega)) , T ) + ( \as \: T , \widetilde{\omega}) = 0
  \label{eq:KKT_as}
\end{equation}
holds for all $T \in \Sigma^{p,0}_{\rm sol}$. This implies that
\begin{equation}
  ( | S - K (u_\Omega,y_\Omega) |^{p-2} (S - K (u_\Omega,y_\Omega)) + \widetilde{\omega} , T ) = 0
\end{equation}
holds for all $T \in \Sigma^{p,0}_{\rm sol}$. Therefore, the Helmholtz decomposition (for each row) in
$L^{p^\ast} (\Omega;\R^{d \times d})$ (see Remark \ref{rmrk-Helmholtz} above) states that
\begin{equation}
  | S - K (u_\Omega,y_\Omega) |^{p-2} (S - K (u_\Omega,y_\Omega)) + \widetilde{\omega} = \nabla \tilde{\theta}
\end{equation}
with some $\tilde{\theta} \in W^{1,p^\ast} (\Omega;\R^d)$. Finally, we construct a rotation $\rho \in {\rm RM} (\R^d)$ in such a way that
\begin{equation}
  ( \nabla \tilde{\theta} - \nabla \rho , \nabla \gamma ) = 0 \mbox{ for all } \gamma \in {\rm RM} (\R^d)
\end{equation}
holds. This is a nonsingular linear system of dimension $d (d-1) / 2$ (only the rotations are involved). Setting $\theta = \tilde{\theta} - \rho$ and
$\omega = \widetilde{\omega} - \nabla \rho$ leads to
\begin{equation}
  | S - K (u_\Omega,y_\Omega) |^{p-2} (S - K (u_\Omega,y_\Omega)) + \omega = \nabla \theta
  \label{eq:multiplier_gradient}
\end{equation}
with $( \nabla \theta , \nabla \gamma ) = 0$ for all $\gamma \in {\rm RM} (\R^d)$. Finally $\theta \in \Theta_{\perp,{\rm RM}}^{p^\ast}$ can be achieved by
adding a suitable constant vector which leaves (\ref{eq:multiplier_gradient}) unaltered. Moreover, we also get
$\omega \in \Xi^{p^\ast}$ since for all constant skew-symmetric $E \in \R^{d \times d}$, (\ref{eq:multiplier_gradient}) gives
\begin{equation}
  ( \omega , E ) = ( \nabla \theta , E ) - ( | S - K (u_\Omega,y_\Omega) |^{p-2} (S - K (u_\Omega,y_\Omega)) , E ) = ( \nabla \theta , \nabla \hat{\rho} )
  \label{eq:multiplier_orthogonality}
\end{equation}
due to the symmetry of $S - K (u_\Omega,y_\Omega)$ and with some $\hat{\rho} \in {\rm RM} (\R^d)$. Due to $\theta \in \Theta_{\perp,{\rm RM}}^{p^\ast}$,
the term in (\ref{eq:multiplier_orthogonality}) is zero. Multiplication of (\ref{eq:multiplier_gradient}) with test functions $T \in \Sigma^{p,0}$ and integration by
parts leads to the first equation in (\ref{eq:KKT}).

Uniqueness of the Lagrange multipliers follows from the fact that (\ref{eq:multiplier_gradient}) implies
\begin{equation}
  \varepsilon (\theta) = | S - K (u_\Omega,y_\Omega) |^{p-2} (S - K (u_\Omega,y_\Omega)) \: ,
  \label{eq:multiplier_symmetric_gradient}
\end{equation}
which determines $\theta$ up to elements in ${\rm RM} (\R^d)$ which are, in turn, fixed by the constraints in the definition of $\Theta_{\perp,{\rm RM}}^{p^\ast}$
in (\ref{eq:deformation_orthogonal_RM}). Moreover, (\ref{eq:multiplier_gradient}) also implies that $\omega = (\nabla \theta - \nabla \theta^T)/2$ holds which now
uniquely determines $\omega \in \Xi^{p^\ast}$. $\Box$

\begin{theorem}
  The dual norm of the shape derivative (\ref{eq:dual_norm_elastic}) and the best approximation (\ref{eq:best_approximation_S}) are connected by
  \begin{equation}
    \| J^\prime (\Omega) \|_{p^\ast;\perp,{\rm RM}}^\ast = \inf_{T \in \Sigma^{p,0}_{f,j,{\rm sym}}} \| T - K ( u_\Omega , y_\Omega ) \|_{L^p (\Omega)} \: .
    \label{eq:dual_norm_as_best_approximation_symmetric}
  \end{equation}
  Moreover, the Lagrange multiplier $\theta \in \Theta_{\perp,{\rm RM}}^{p^\ast}$ from (\ref{eq:KKT}) is the direction of steepest descent with respect to the dual
  norm of the shape derivative on the left in (\ref{eq:dual_norm_as_best_approximation_symmetric}) in the sense that
  \begin{equation}
    \frac{J^\prime (\Omega) [\theta]}{\| \varepsilon (\theta) \|_{L^{p^\ast} (\Omega)}}
    = \inf_{\chi \in \Theta_{\perp,{\rm RM}}} \frac{J^\prime (\Omega) [\chi]}{\| \varepsilon (\chi) \|_{L^{p^\ast} (\Omega)}} \hspace{0.5cm}
    ( = - \| J^\prime (\Omega) \|_{p^\ast;\perp,{\rm RM}}^\ast )
    \label{eq:steepest_descent_symmetric}
  \end{equation}
  holds.
  \label{thrm-dual_norm_as_best_approximation_symmetric}
\end{theorem}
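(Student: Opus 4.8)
The plan is to obtain (\ref{eq:dual_norm_as_best_approximation_symmetric}) by combining the upper bound (\ref{eq:upper_bound_tensor_approximation}), already established over the larger admissible set $\widetilde{\Sigma}^{p,0}_{f,j,{\rm sym}}$, with the divergence-multiplier identity (\ref{eq:multiplier_symmetric_gradient}) produced in the proof of Theorem \ref{thrm-saddle_point_problem_p}. Abbreviating $R := K(u_\Omega,y_\Omega) - S$, so that (\ref{eq:multiplier_symmetric_gradient}) reads $\varepsilon(\theta) = -|R|^{p-2}R$, I would first record two elementary consequences, both resting on the conjugate-exponent arithmetic behind (\ref{eq:relation_p_pstar}) (namely $(p-1)p^\ast = p$ and $p - p/p^\ast = 1$): first, $(R,\varepsilon(\theta)) = -(R,|R|^{p-2}R) = -\|R\|_{L^p(\Omega)}^p$; second, $\|\varepsilon(\theta)\|_{L^{p^\ast}(\Omega)}^{p^\ast} = \int_\Omega |R|^{(p-1)p^\ast}\,dx = \|R\|_{L^p(\Omega)}^p$, i.e.\ $\|\varepsilon(\theta)\|_{L^{p^\ast}(\Omega)} = \|R\|_{L^p(\Omega)}^{p/p^\ast}$.

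Since $\Sigma^{p,0}_{f,j,{\rm sym}} \subset \widetilde{\Sigma}^{p,0}_{f,j,{\rm sym}}$, the bound (\ref{eq:upper_bound_tensor_approximation}) holds a fortiori for every $T \in \Sigma^{p,0}_{f,j,{\rm sym}}$, and passing to the infimum over such $T$ gives $\| J^\prime(\Omega) \|_{p^\ast;\perp,{\rm RM}}^\ast \le \inf_{T \in \Sigma^{p,0}_{f,j,{\rm sym}}} \| T - K(u_\Omega,y_\Omega) \|_{L^p(\Omega)} = \|R\|_{L^p(\Omega)}$, where the last equality holds because $S \in \Sigma^{p,0}_{f,j,{\rm sym}}$ is the minimizer, by Theorem \ref{thrm-saddle_point_problem_p}. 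For the reverse inequality I would evaluate the supremum in (\ref{eq:dual_norm_elastic}) at the competitor $\chi = -\theta \in \Theta^{p^\ast}_{\perp,{\rm RM}}$: using (\ref{eq:relation_shape_derivative_tensor}) with $T = S$ (legitimate since $S \in \widetilde{\Sigma}^{p,0}_{f,j,{\rm sym}}$) together with the first identity above, $J^\prime(\Omega)[-\theta] = -(R,\varepsilon(\theta)) = \|R\|_{L^p(\Omega)}^p$, so the associated quotient equals $\|R\|_{L^p(\Omega)}^p / \|R\|_{L^p(\Omega)}^{p/p^\ast} = \|R\|_{L^p(\Omega)}$. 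Hence $\| J^\prime(\Omega) \|_{p^\ast;\perp,{\rm RM}}^\ast \ge \|R\|_{L^p(\Omega)}$, and (\ref{eq:dual_norm_as_best_approximation_symmetric}) follows.

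For the steepest-descent statement (\ref{eq:steepest_descent_symmetric}), the same computation with $\chi = \theta$ instead of $-\theta$ gives $J^\prime(\Omega)[\theta] = (R,\varepsilon(\theta)) = -\|R\|_{L^p(\Omega)}^p$, whence $J^\prime(\Omega)[\theta]/\|\varepsilon(\theta)\|_{L^{p^\ast}(\Omega)} = -\|R\|_{L^p(\Omega)} = -\| J^\prime(\Omega) \|_{p^\ast;\perp,{\rm RM}}^\ast$ by what has just been proved. On the other hand, because $\Theta^{p^\ast}_{\perp,{\rm RM}}$ is a linear subspace on which $\|\varepsilon(\cdot)\|_{L^{p^\ast}(\Omega)}$ is a norm and $J^\prime(\Omega)$ is linear, replacing $\chi$ by $-\chi$ shows $\inf_{\chi \in \Theta^{p^\ast}_{\perp,{\rm RM}}} J^\prime(\Omega)[\chi]/\|\varepsilon(\chi)\|_{L^{p^\ast}(\Omega)} = -\| J^\prime(\Omega) \|_{p^\ast;\perp,{\rm RM}}^\ast$, which is precisely the value attained at $\theta$; this is (\ref{eq:steepest_descent_symmetric}).

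I do not expect a real obstacle, since the analytic substance --- existence, uniqueness, and the representation $\varepsilon(\theta) = |S - K(u_\Omega,y_\Omega)|^{p-2}(S - K(u_\Omega,y_\Omega))$ of the divergence multiplier --- is already provided by Theorems \ref{theorem-unique_S} and \ref{thrm-saddle_point_problem_p}, and what remains is essentially the bookkeeping of the conjugate exponents. The two points deserving attention are that the supremum in (\ref{eq:dual_norm_elastic}) is not a priori known to be attained, so the lower bound has to be produced by the explicit competitor $-\theta$ rather than by a compactness argument, and the degenerate case $S = K(u_\Omega,y_\Omega)$ (equivalently $\theta = 0$), in which $J^\prime(\Omega) \equiv 0$ on $\Theta^{p^\ast}_{\perp,{\rm RM}}$ and both (\ref{eq:dual_norm_as_best_approximation_symmetric}) and (\ref{eq:steepest_descent_symmetric}) hold trivially.
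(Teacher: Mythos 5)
Your proposal is correct and follows essentially the same route as the paper's proof: the upper bound via H\"older's inequality applied to the representation (\ref{eq:relation_shape_derivative_tensor}), and the lower bound by testing with the explicit competitor $-\theta$ together with the multiplier identity (\ref{eq:multiplier_symmetric_gradient}) and the conjugate-exponent computations (\ref{eq:numerator})--(\ref{eq:denominator}). The only differences are cosmetic (the sign convention $R = K(u_\Omega,y_\Omega) - S$ and your explicit remark on the degenerate case $\theta = 0$, which the paper leaves implicit).
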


\begin{proof}
  For each $\chi \in \Theta^{p^\ast}_{\perp,{\rm RM}}$ and each $T \in \Sigma^{p,0}_{f,j,{\rm sym}}$, (\ref{eq:relation_shape_derivative_tensor_bound}) leads to
  \begin{equation}
    \frac{J^\prime (\Omega) [\chi]}{\| \varepsilon (\chi) \|_{L^{p^\ast} (\Omega)}} \leq \| T - K ( u_\Omega , y_\Omega ) \|_{L^p (\Omega)}
    \label{eq:dual_inequality}
  \end{equation}
  which implies
  \begin{equation}
    \| J^\prime (\Omega) \|_{p^\ast;\perp,{\rm RM}}^\ast \leq \inf_{T \in \Sigma^{p,0}_{f,j,{\rm sym}}} \| T - K ( u_\Omega , y_\Omega ) \|_{L^p (\Omega)} \: .
  \end{equation}
  Due to Theorem \ref{theorem-unique_S}, there is an $S \in \Sigma^{p,0}_{f,j,{\rm sym}}$ such that the right-hand side in
  (\ref{eq:dual_norm_as_best_approximation_symmetric}) equals $\| S - K ( u_\Omega , y_\Omega ) \|_{L^p (\Omega)}$. Therefore, in order to establish
  (\ref{eq:dual_norm_as_best_approximation_symmetric}) all that is left to show is the existence of $\vartheta \in \Theta^{p^\ast}_{\perp,{\rm RM}}$ with
  \begin{equation}
    \frac{J^\prime (\Omega) [\vartheta]}{\| \varepsilon (\vartheta) \|_{L^{p^\ast} (\Omega)}} = \| S - K ( u_\Omega , y_\Omega ) \|_{L^p (\Omega)} \: .
    \label{eq:dual_equality}
  \end{equation}
  If we set $\vartheta = - \theta$, where $\theta \in \Theta_{\perp,{\rm RM}}^{p^\ast}$ is the Lagrange multiplier in (\ref{eq:KKT}), then
  (\ref{eq:relation_shape_derivative_tensor}) leads to
  \begin{equation}
    J^\prime (\Omega) [\vartheta] = - J^\prime (\Omega) [\theta] = ( S - K ( u_\Omega,y_\Omega ) , \varepsilon (\theta) ) \: .
  \end{equation}
  Inserting (\ref{eq:multiplier_symmetric_gradient}), we obtain
  \begin{equation}
    \begin{split}
      J^\prime (\Omega) [\vartheta] & = ( S - K ( u_\Omega,y_\Omega ) , | S - K (u_\Omega,y_\Omega) |^{p-2} ( S - K (u_\Omega,y_\Omega) ) ) \\
      & = \| S - K (u_\Omega,y_\Omega) \|_{L^p (\Omega)}^p \: .
    \end{split}
    \label{eq:numerator}
  \end{equation}
  On the other hand, putting norms directly around (\ref{eq:multiplier_symmetric_gradient}), we get
  \begin{equation}
    \begin{split}
      \| \varepsilon (\theta) \|_{L^{p^\ast} (\Omega)} & = \| | S - K (u_\Omega,y_\Omega) |^{p-2} (S - K (u_\Omega,y_\Omega)) \|_{L^{p^\ast} (\Omega)} \\
      & = \left( \int_\Omega | S - K (u_\Omega,y_\Omega) |^{(p-2) p^\ast + p^\ast} dx \right)^{1/p^\ast} \\
      & = \left( \int_\Omega | S - K (u_\Omega,y_\Omega) |^p dx \right)^{(p-1)/p} = \| S - K (u_\Omega,y_\Omega) \|_{L^p (\Omega)}^{p-1} \: .
    \end{split}
    \label{eq:denominator}
  \end{equation}
  Combining (\ref{eq:numerator}) and (\ref{eq:denominator}) proves (\ref{eq:dual_equality}).
  
  Finally, (\ref{eq:steepest_descent_symmetric}) follows from
  \begin{equation}
    \begin{split}
      - \| J^\prime (\Omega) & \|_{p^\ast;\perp,{\rm RM}}^\ast
      = \inf_{\chi \in \Theta_{\perp,{\rm RM}}} \frac{J^\prime (\Omega) [\chi]}{\| \varepsilon (\chi) \|_{L^{p^\ast} (\Omega)}}
      \leq \frac{J^\prime (\Omega) [-\theta]}{\| \varepsilon (-\theta) \|_{L^{p^\ast} (\Omega)}} \\
      & = - \frac{J^\prime (\Omega) [\theta]}{\| \varepsilon (\theta) \|_{L^{p^\ast} (\Omega)}} = - \| S - K ( u_\Omega , y_\Omega ) \|_{L^p (\Omega)}
      \leq - \| J^\prime (\Omega) \|_{p^\ast;\perp,{\rm RM}}^\ast \: ,
    \end{split}
  \end{equation}
  the last inequality following from (\ref{eq:dual_inequality}).
\end{proof}

\begin{remark}
  The second statement in Theorem \ref{thrm-dual_norm_as_best_approximation_symmetric} means that $\theta$ is the steepest descent direction for the
  shape functional in the $W^{1,p^\ast} (\Omega)$ topology endowed with the $L^{p^\ast} (\Omega)$ norm of the symmetric gradient. In the limit, for
  $p^\ast \rightarrow \infty$, this represents another way of accessing the shape gradient with respect to the $W^{1,\infty}$ topology
  recently studied by Deckelnick, Herbert and Hinze in \cite{DecHerHin:22}; see also \cite[Theorem 3.3]{Sta:24}.
\end{remark}

\begin{remark}
  Looking at the optimality conditions (\ref{eq:KKT}), it is obvious that one may insert functions which are not weakly differentiable for $\theta$ but only
  satisfy $\theta \in L^{p^\ast} (\Omega ; \R^d)$. This raises the question if it is also possible to get away with test functions $\chi \in L^{p^\ast} (\Omega ; \R^d)$
  which are discontinuous. To this end, we rewrite the second line in (\ref{eq:KKT}) which, together with the boundary condition on $S$, reads
  \begin{equation}
    ( \div \: S , \chi ) - ( f \: \nabla y_\Omega , \chi ) - ( j (u_\Omega) , \div \: \chi ) - \langle S \cdot n , \left. \chi \right|_{\partial \Omega} \rangle = 0
    \label{eq:constraint_two_combined}
  \end{equation}
  for all $\chi \in W^{1,p^\ast} ( \Omega;\R^d )$ in correspondence to the derivation from the shape derivative (\ref{eq:shape_derivative}). Integrating by parts
  (under the assumption that $j (u_\Omega) \in W^{1,p} (\Omega)$) and separating the conditions in the interior and on the boundary, we get
  \begin{equation}
    \begin{split}
      ( \div \: S , \chi ) - ( f \: \nabla y_\Omega , \chi ) + ( \nabla j (u_\Omega) , \chi ) & = 0 \: , \\
      \langle S \cdot n , \left. \chi \right|_{\partial \Omega} \rangle + \langle j (u_\Omega) , \left. \chi \cdot n \right|_{\partial \Omega} \rangle & = 0 \: ,
    \end{split}
    \label{eq:constraint_two_modified}
  \end{equation}
  where we may choose $\chi \in L^{p^\ast} (\Omega;\R^d)$ for the first constraint and treat the second one independently. This means that
  $S \notin \Sigma^{p,0}$ for the shape tensor approximation in (\ref{eq:constraint_two_modified}) in contrast to the solution of (\ref{eq:KKT}). However, the
  difference between the two is divergence-free, i.e., contained in $\Sigma_{\rm sol}^{p,0}$, and therefore lead to the same best approximation.

  The constraints in (\ref{eq:constraint_two_modified}) may be evaluated for more general, possibly discontinuous, test functions $\chi$ and
  $\left. \chi \right|_{\partial \Omega}$, respectively, which will be important for the finite element discretization in the following section.
  In particular, under the additional assumption that $j (0) = 0$, the second boundary term in (\ref{eq:constraint_two_modified}) vanishes and we
  may again seek $S \in \Sigma^{p,0}$, now with the modified constraint from (\ref{eq:constraint_two_modified}). This leads to the following optimality system
  for $S \in \Sigma^{p,0}$ and the Lagrange multipliers $\theta \in \Theta^{p^\ast}_{\perp,{\rm RM}}$ and $\omega \in \Xi^{p^\ast}$:
  \begin{equation}
    \begin{split}
      ( | S - K (u_\Omega,y_\Omega) |^{p-2} (S - K (u_\Omega,y_\Omega)) , T ) \hspace{1.5cm} & \\
      + ( \div \: T , \theta ) + ( \as \: T , \omega ) & = 0 \mbox{ for all } T \in \Sigma^{p,0} \: , \\
      ( \div \: S , \chi ) - ( f \: \nabla y_\Omega , \chi ) + ( \nabla j (u_\Omega) , \chi ) & = 0 \mbox{ for all } \chi \in \Theta^{p^\ast}_{\perp,{\rm RM}} \: , \\
      ( \as \: S , \gamma ) & = 0 \mbox{ for all } \gamma \in \Xi^{p^\ast} \: .
    \end{split}
    \label{eq:KKT_2}
  \end{equation}
\end{remark}

\section{Discretization with weakly symmetric tensor finite elements}
\label{sec-discretization}

On a triangulation $\cT_h$ of a polyhedrally bounded approximation $\Omega_h$ of $\Omega$, we use the lowest-order PEERS finite element combination
(cf. \cite{ArnBreDou:84} for the original two-dimensional element and \cite[Sect. 9.4]{BofBreFor:13} for the generalization to three dimensions) for the
approximate solution of (\ref{eq:KKT_2}). The finite-dimensional subspace $\Sigma_h^{p,0} \subset \Sigma^{p,0}$ consists of lowest-order Raviart-Thomas
functions enriched with the curl of suitable element bubbles for each row. The discrete Lagrange multiplier spaces are $\Theta_h$ consisting of piecewise
constants for the (component-wise) approximation of the deformation field $\theta \in \Theta^{p^\ast}_{\perp,{\rm RM}}$ and continuous piecewise linear functions
for each skew-symmetric matrix entry in $\Xi_h$ approximating $\omega \subset \Xi^{p^\ast}$. The mean value zero constraints in the definition of $\Xi^{p^\ast}$
in (\ref{eq:rotation_space}) are enforced by additional constraints (one or three in the two- or three-dimensional case, respectively) on $\omega_h \in \Xi_h$
with additional Lagrange multipliers.
Note that these constraints also enforce the mean value zero constraints connected to the rotations in the definition of $\Theta^{p^\ast}_{\perp,{\rm RM}}$ in
(\ref{eq:deformation_orthogonal_RM}) and vice versa (see the proof of Theorem 3.3 towards the end). This is fortunate since we could not enforce these
constraints in the subspace $\Theta_h$ of discontinuous functions so easily. The other mean value constraints on $\theta_h \in \Theta_h$ associated with
the (two or three, respectively) constant translations can again be enforced as additional constraints with real numbers as Lagrange multipliers.

We end up with the following finite element version of the optimality system (\ref{eq:KKT_2}): Find $S_h \in \Sigma_h^{p,0}$, $\theta_h \in \Theta_h$,
$\omega_h \in \Xi_h$ and $\lambda \in \R^d$, $\mu \in \R^{d(d-1)/2}$ such that
\begin{equation}
  \begin{split}
    ( | S_h - K (u_{\Omega,h},y_{\Omega,h}) |^{p-2} (S_h - K (u_{\Omega,h},y_{\Omega,h})) , T_h ) \hspace{2.5cm} \\
    + ( \div \: T_h , \theta_h ) + ( \as \: T_h , \omega_h ) & = 0 \: , \\
    ( \div \: S_h , \chi_h ) - ( f \: \nabla y_{\Omega,h} , \chi_h ) + ( \nabla j (u_{\Omega,h}) , \chi_h ) & = 0 \: , \\
    ( \as \: S_h , \gamma_h ) & = 0 \: , \\
    ( \theta_h , \zeta ) & = 0 \: , \\
    ( \omega_h , A^d (\xi) ) & = 0
  \end{split}
  \label{eq:KKT_h}
\end{equation}
holds for all $T_h \in \Sigma_h^{p,0}$, $\chi_h \in \Theta_h$, $\gamma_h \in \Xi_h$, $\zeta \in \R^d$ and $\xi \in \R^{d(d-1)/2}$. In the last line of (\ref{eq:KKT_h}),
the notation
\begin{equation}
  A^2 (\xi) = \begin{pmatrix} \;\;0 & \xi \\ -\xi & 0 \end{pmatrix} \: , \:
  A^3 (\xi) = \begin{pmatrix} \;\;0 & \;\;\xi_3 & -\xi_2 \\ -\xi_3 & \;\;0 & \;\;\xi_1 \\ \;\;\xi_2 & -\xi_1 & \;\;0 \end{pmatrix}
\end{equation}
is used. Here, $u_{\Omega,h}$ and $y_{\Omega,h}$ are conforming piecewise linear finite element approximations of (\ref{eq:bvp}) and (\ref{eq:adjoint_bvp}),
respectively. $\eta_{p,h} (\Omega_h) := \| S_h - K (u_{\Omega,h},y_{\Omega,h}) \|_{L^p (\Omega_h)}$ are approximate values for the distance of the shape tensor
$K (u_\Omega,y_\Omega)$ to $\Sigma^{p,0}_{f,j,{\rm sym}}$ and therefore also for the dual norm of the shape derivative
$\| J^\prime (\Omega) \|_{p^\ast;\perp,{\rm RM}}^\ast$.

The following rather simple fixed point scheme is used to handle the nonlinearity in the first line of (\ref{eq:KKT_h}): Determine, in step $k$,
$S_h^{(k)} \in \Sigma_h^{p,0}$, $\theta_h \in \Theta_h$, $\omega_h \in \Xi_h$ and $\lambda \in \R^d$, $\mu \in \R^{d(d-1)/2}$ such that
\begin{equation}
  \begin{split}
    ( | S_h^{(k-1)} - K (u_{\Omega,h},y_{\Omega,h}) |^{p-2} (S_h^{(k)} - K (u_{\Omega,h},y_{\Omega,h})) , T_h ) \hspace{2cm} \\
    + ( \div \: T_h , \theta_h ) + ( \as \: T_h , \omega_h ) & = 0 \: , \\
    ( \div \: S_h^{(k)} , \chi_h ) - ( f \: \nabla y_{\Omega,h} , \chi_h ) + ( \nabla j (u_{\Omega,h}) , \chi_h ) & = 0 \: , \\
    ( \as \: S_h^{(k)} , \gamma_h ) & = 0 \: , \\
    ( \theta_h , \zeta ) & = 0 \: , \\
    ( \omega_h , A^d (\xi) ) & = 0
  \end{split}
  \label{eq:KKT_iterative}
\end{equation}
is satisfied for all $T_h \in \Sigma_h^{p,0}$, $\chi_h \in \Theta_h$, $\gamma_h \in \Xi_h$, $\zeta \in \R^d$ and $\xi \in \R^{d(d-1)/2}$. Our experience is that this
works well as long as $p$ is not too close to 1. The value of $p = 1.1$ that we used in the computational experiments presented below are well on the safe
side where (\ref{eq:KKT_iterative}) converges and also the reconstruction process detailed in the next section provides accurate results.

\noindent
{\bf Example 1 (Circular optimal shape).}
\label{example-circle}
We illustrate the behavior of the computed values for $\eta_{p,h} (\Omega_h)$ for a very simple test problem where the optimal shape is a disk.
As right-hand side for (\ref{eq:bvp}) we choose $f = -1/2$ inside the unit disk $D \subset \R^2$ and $f = 1/2$ outside, while we set $j (u_\Omega) \equiv u_\Omega/2$
for the definition of the shape functional. The shape tensor for disks $D_R$ of radius $R > 1$ can be explicitly calculated from the solutions of (\ref{eq:bvp}) and
(\ref{eq:adjoint_bvp}),
\begin{equation}
  u_R (x) = \left\{
  \begin{array}{ll}
    \frac{R^2 + | x |^2}{8} - \frac{1}{4} - \frac{1}{2} \ln R \: , \: & 0 \leq | x | < 1 \: , \\
    \frac{R^2 - | x |^2}{8} + \frac{1}{2} \ln \frac{|x|}{R} \: , \: & 1 < | x | < R \: ,
  \end{array} \right. \hspace{1cm}
  y_R (x) = \frac{| x |^2 - R^2}{8} \: ,
  \label{eq:bvp_solutions_example_1}
\end{equation}
as
\begin{equation}
  \begin{split}
    K ( u_R , y_R ) & = \left( \nabla u_R \cdot \nabla y_R \right) I - \nabla y_R \otimes \nabla u_R - \nabla u_R \otimes \nabla y_R \\
    & = \left\{ \begin{array}{ll}
      \frac{1}{16} | \id |^2 I - \frac{1}{8} \id \otimes \id & \: , \: 0 \leq | x | < 1 \: , \\
      \left( \frac{1}{8} - \frac{1}{16} | \id |^2 \right) I - \left( \frac{1}{4 \: | \id |^2} - \frac{1}{8} \right) \id \otimes \id & \: , \: 1 < | x | < R
    \end{array} \right.
  \end{split}
  \label{eq:tensor_example_1}
\end{equation}
(see \cite{Sta:24}). It can be seen that, for $R = \sqrt{2}$, $S = K ( u_{\sqrt{2}} , y_{\sqrt{2}} )$ itself satisfies (\ref{eq:KKT_2}) which means that this is an optimal shape.

We compute approximations $\eta_{1.1,h} (\Omega)$ to $\| S - K (u_\Omega,y_\Omega) \|_{L^p (\Omega)}$ for different domains $\Omega$ of variable closeness
to the optimal shape shown in Figure \ref{fig-shapeex1}, which are all scaled to have the same area $| \Omega | = 2 \pi$ (shared by the optimal shape $D_{\sqrt{2}}$):
a square (solid line), an octagon (dashed line) and a hexadecagon (16 edges, dotted line).

\begin{figure}[h!]
  \hspace{1cm}\includegraphics[scale=0.32]{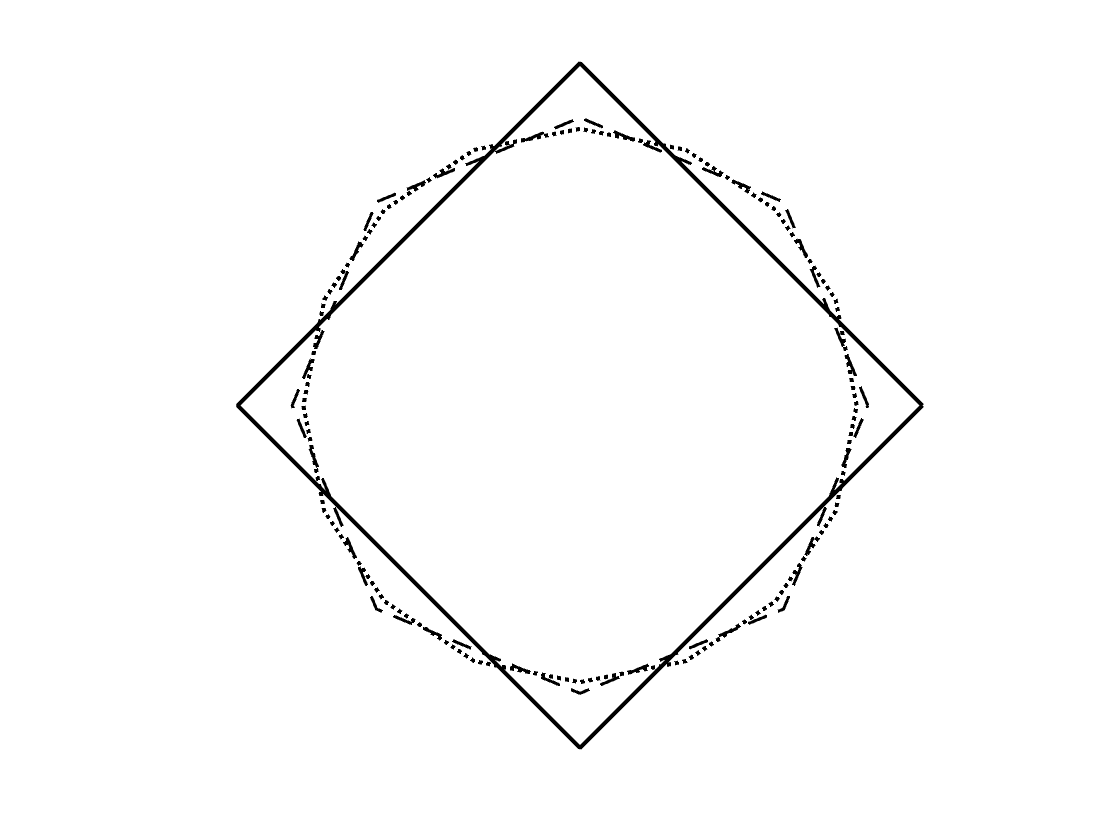}
  \caption{Example 1: Polygonal domains approximating a disk}
  \label{fig-shapeex1}
\end{figure}

\begin{figure}[h!]
  \hspace{1cm} \includegraphics[scale=0.26]{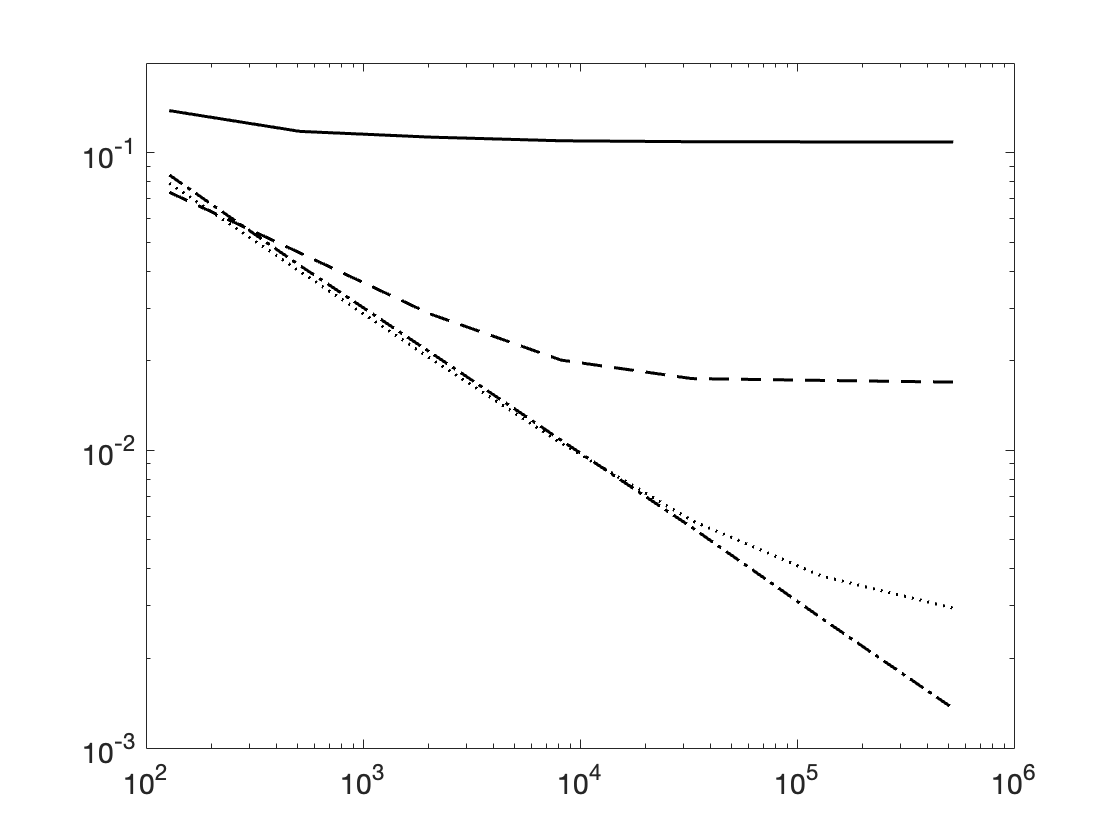}
  \caption{Example 1: Values $\eta_{1.1,h} (\Omega_h)$ for square, octagon and hexadecagon}
  \label{fig-shapeex1delta1point1}
\end{figure}

Figure \ref{fig-shapeex1delta1point1} shows the behavior of $\eta_{1.1,h} (\Omega)$ in dependence of the number of elements for $p = 1.1$ on a sequence of
triangulations resulting from uniform refinement. For the square (solid line), the functional stagnates almost from the beginning at its distance to stationarity.
For the octagon (dashed line), it is reduced somewhat until it stagnates at a lower value, meaning that it is closer, in the mathematical strict sense of Theorem
\ref{thrm-dual_norm_as_best_approximation_symmetric}, to being a stationary shape. For the hexadecagon (dotted line), the behavior is repeated with an even
smaller asymptotical value. Finally, the dash-dotted line shows the behavior of $\eta_{1.1,h}$ for successively closer polygonal approximations to the optimal disk
$D_{\sqrt{2}}$ tending to zero at a rate proportionally to $h$ (the inverse of the square root of the number of elements). In this case, $\eta_{p,h} (\Omega_h)$ also
contains the geometry error associated with the approximation $\Omega_h$ to $D_{\sqrt{2}}$. In contrast, we have $\Omega_h = \Omega$ for the polygonal
domains above and the difference between $\eta_{p,h} (\Omega)$ and $\| S - K (u_\Omega,y_\Omega) \|_{L^p (\Omega)}$ is solely caused by the approximation
error associated with solving (\ref{eq:KKT_h}) and the underlying boundary value problems (\ref{eq:bvp}) and (\ref{eq:adjoint_bvp}). The results shown in
Figure \ref{fig-shapeex1delta1point1} look quite similar to those obtained in \cite{Sta:24} without the weak symmetry condition although the difference between
the different polygonal shapes seems to initiate at somewhat coarser meshes. This alone, however, would not be a convincing argument for the use of the
more complicated approach presented in this paper. The true advantage of working with the weak symmetry condition lies in the behavior of the shape gradient
iteration presented in the following section.

\section{Shape gradient iteration with reconstructed deformations in $W^{1,p^\ast} (\Omega)$}
\label{sec-shape_gradient_iteration}

The finite element approximation $\theta_h$ obtained from (\ref{eq:KKT_h}) for the steepest descent deformations is a discontinuous function, in general, and
therefore not suitable for computing a deformed shape $(\id + \theta_h) (\Omega_h)$. Instead, a continuous deformation $\theta_h^\diamond$ is
required in order to lead to an admissible domain $(\id + \theta_h^\diamond) (\Omega_h)$ with a non-degenerate triangulation $(\id + \theta_h^\diamond) (\cT_h)$.
While $\theta_h$ is already a good approximation to $\theta$ in the $L^{p^\ast} (\Omega;\R^d)$-norm, we also know from the first equation in (\ref{eq:KKT_h}) that
$| S_h - K (u_{\Omega,h},y_{\Omega,h}) |^{p-2} (S_h - K (u_{\Omega,h},y_{\Omega,h})) + \omega_h$ is a good approximation to
\begin{equation}
  | S - K (u_\Omega,y_\Omega) |^{p-2} (S - K (u_\Omega,y_\Omega)) + \omega = \nabla \theta \: .
\end{equation}
This knowledge can be exploited in the following local potential reconstruction procedure for a continuous deformation
$\theta_h^\diamond \in W^{1,p^\ast} (\Omega_h;\R^d)$:\\
{\bf Algorithm 1.}\\
1. For each element $\tau \in \cT_h$, compute $\left. \nabla \theta_h^\Box \right|_\tau \in \R^{d \times d}$ such that
\[
  \| \nabla \theta_h^\Box - \left| S_h - K ( u_{\Omega,h} , y_{\Omega,h} ) \right|^{p-2} ( S_h - K ( u_{\Omega,h} , y_{\Omega,h} ) - \omega_h \|_{L^{p^\ast} (\tau)}
  \longrightarrow \min!
\]
2. For each element $\tau \in \cT_h$, compute $\theta_h^\Box$ with $\left. \nabla \theta_h^\Box \right|_\tau$ given by step 1 such that
\[
  \| \theta_h^\Box - \theta_h \|_{L^{p^\ast} (\tau)} \longrightarrow \min!
\]
3. Compute $\theta_h^\diamond$ continuous and piecewise linear by averaging at the vertices:
\[
  \theta_h^\diamond (\nu) = \frac{1}{| \{ \tau \in \cT_h : \nu \in \tau \} |} \sum_{\tau \in \cT_h : \nu \in \tau} \theta_h^\Box (\left. \nu \right|_\tau) \: .
\]
Obviously, the above algorithm involves only local computations and the only difference with respect to the one in \cite{Sta:24} is that $\omega_h$ is added in
the first step. The main computational work consists in solving nonlinear systems of equations of dimension $d^2$ and $d$, respectively, in steps 1 and 2 for
each element $\tau \in \cT_h$. The algorithm coincides with (the lowest-order case of) the classical reconstruction technique by Stenberg (see \cite{Ste:91},
where it is presented for $p^\ast = 2$). Alternatively, a related reconstruction procedure based on a vertex patch decomposition (see \cite[Sect. 3]{ErnVoh:15})
could be used.

Now $\theta_h^\diamond$ can be used to compute a deformed shape $\Omega_h^\diamond = (\id + \alpha \theta_h^\diamond) (\Omega_h)$ with a suitable
step-size $\alpha \in \R$. In our case, $\alpha$ is determined as the solution of the following discrete minimization problem based on the shape functional:
\begin{equation}
  \alpha = \beta^{k^\ast} \: , \: k^\ast = \arg \min_{k \in \Z} J ((\id + \beta^k \theta_h^\diamond) (\Omega_h))
  \label{eq:stepsize_discrete}
\end{equation}
(with $\beta = 1.25$ in our computational experiments).

\noindent
{\bf Example 1; continued.}
The above shape gradient iteration is tested by starting with a tetragon as initial guess and observing how close to the optimal shape $D_{\sqrt{2}}$
one gets. Figure \ref{fig-example_1_circle_from_tetragon} shows the boundary of the final shape $\Omega_h^\diamond$ for $p = 2$ (on the left) and
$p = 1.1$ (on the right) for a triangulation after 7 uniform refinements (131072 elements). Apparently, the right angles from the tetragon have been
straightened out more by the shape gradient iteration for the smaller value of $p$. Also, both shapes look closer to the optimal circle than the corresponding
results without the weak symmetry constraint shown in \cite{Sta:24}.

\begin{figure}[h!]

  \vspace{-0.2cm}

  \hspace{-1.5cm}\includegraphics[scale=0.195]{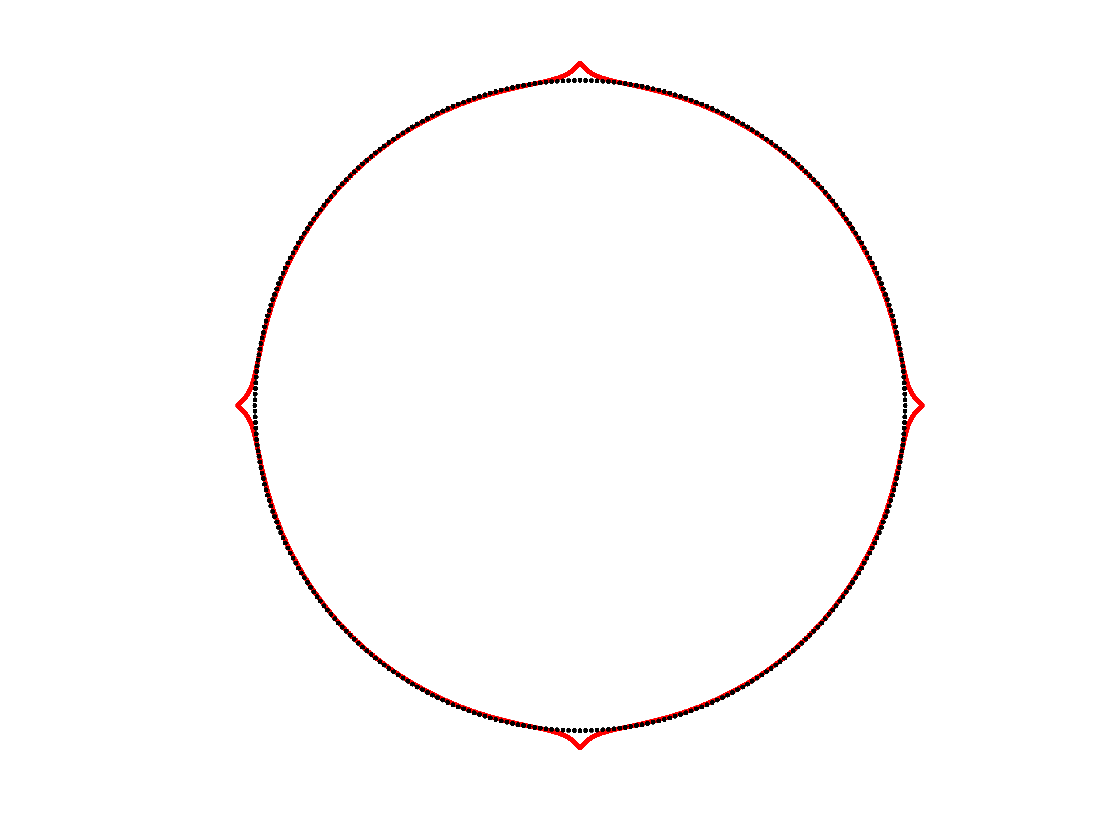}\hspace{-1.35cm}\includegraphics[scale=0.195]{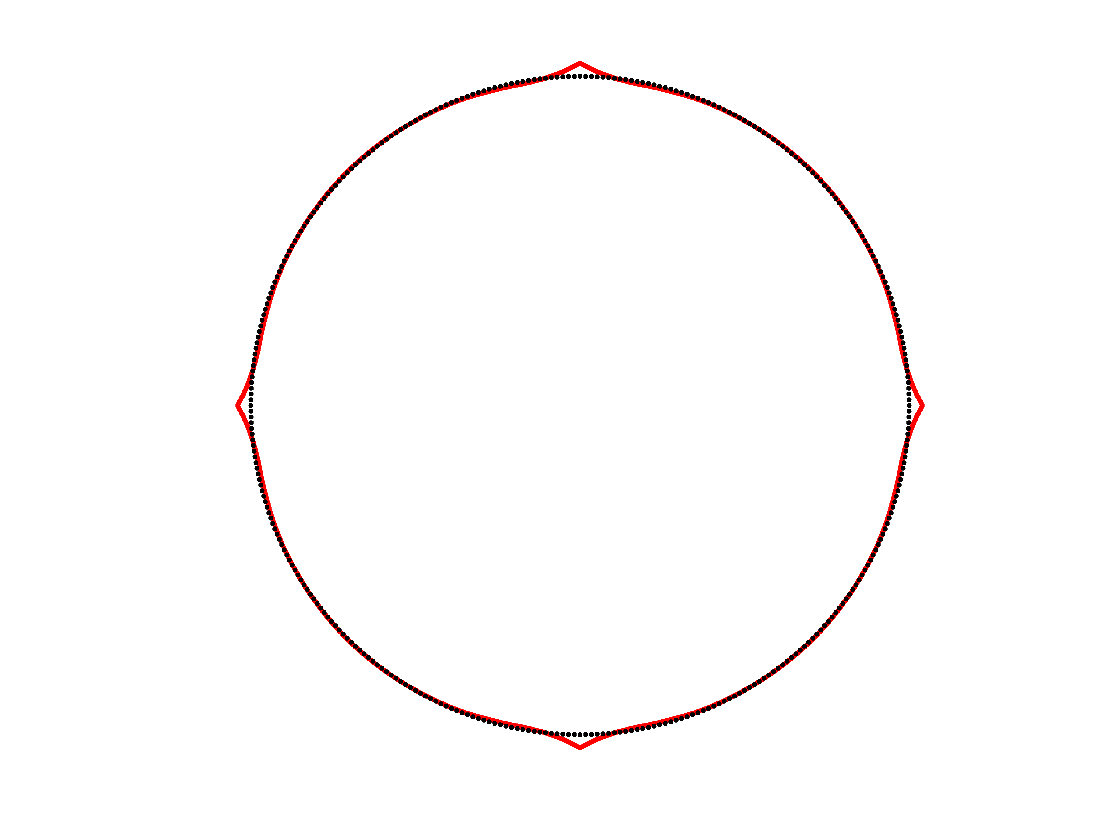}
  
  \vspace{-0.2cm}
  
  \caption{Result of shape gradient iteration for $p = 2$ (left) and $p = 1.1$ (right)}
  \label{fig-example_1_circle_from_tetragon}
\end{figure}



\noindent
{\bf Example 2 (``gingerbread man'').}
This example constitutes a more challenging shape optimization problem to test our approach. The right-hand side in the variational problem (\ref{eq:bvp})
is changed to
\begin{equation}
  \begin{split}
    f (x) & = - \frac{1}{2} + \frac{4}{5} \: | x |^2 + 2 \sum_{i=1}^5 \exp \left( - 8 \: | x - y^{(i)} |^2 \right)
    - \sum_{i=1}^5 \exp \left( - 8 \: | x - z^{(i)} |^2 \right) \\
    \mbox{ with } & \;\;\;\; y^{(i)} = \left( \sin ( \frac{(2 i + 1) \pi}{5} ) , \cos ( \frac{(2 i + 1) \pi}{5} ) \right) \: , \: i = 1 , \ldots , 5 \: , \\
    & \;\;\;\; z^{(i)} = \left( \frac{6}{5} \sin ( \frac{2 i \pi}{5} ) , \frac{6}{5} \cos ( \frac{2 i \pi}{5} ) \right) \: , \: i = 1 , \ldots , 5
  \end{split}
\end{equation}
(see \cite{BarWac:20}) while everything else remains unchanged with respect to Example 1. If we initiate the shape gradient iteration with the unit disk, we obtain
the final shapes shown in Figure \ref{fig-gingerbread_levels4and5} (on the left for $p = 2$ and on the right for $p = 1.1$, on the top for refinement level 4 with 2048
elements and on the bottom for level 5 with 8192 elements).

\begin{figure}[h!]

  \vspace{-0.2cm}
  
  \hspace{-1.65cm}\includegraphics[scale=0.2]{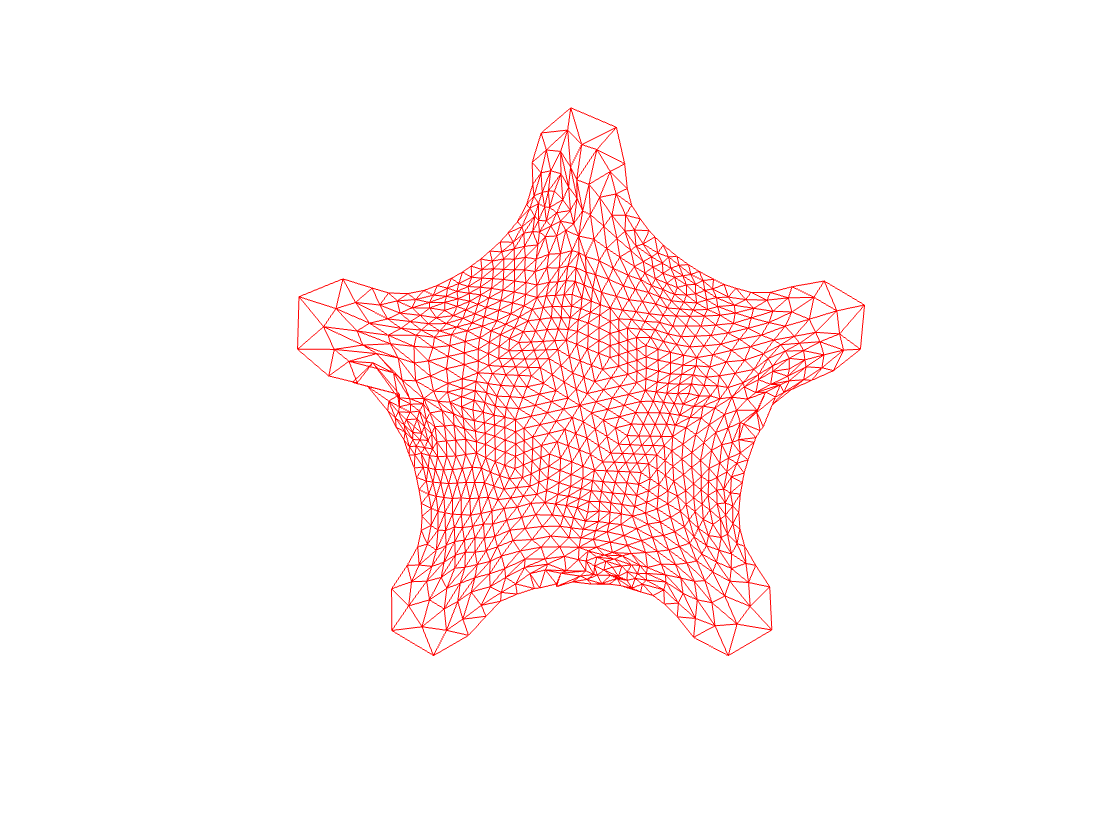}\hspace{-1.5cm}\includegraphics[scale=0.2]{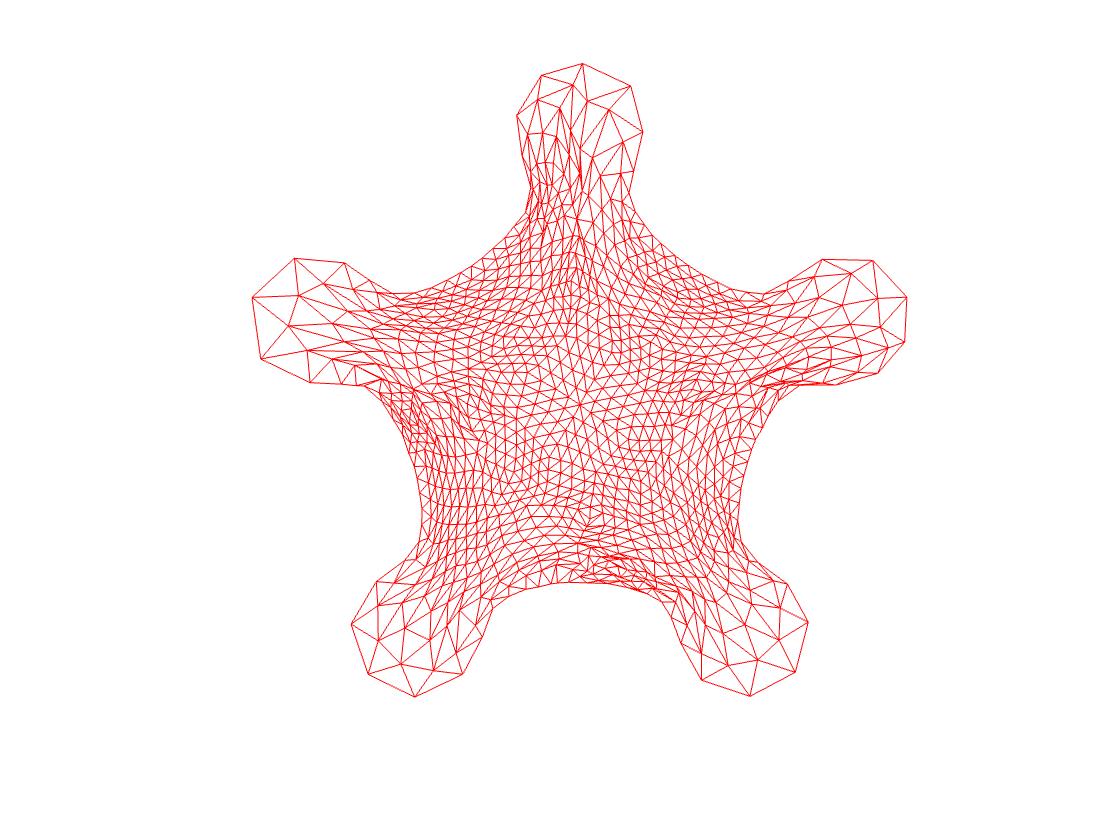}

  \vspace{-0.5cm}

  \hspace{-1.65cm}\includegraphics[scale=0.2]{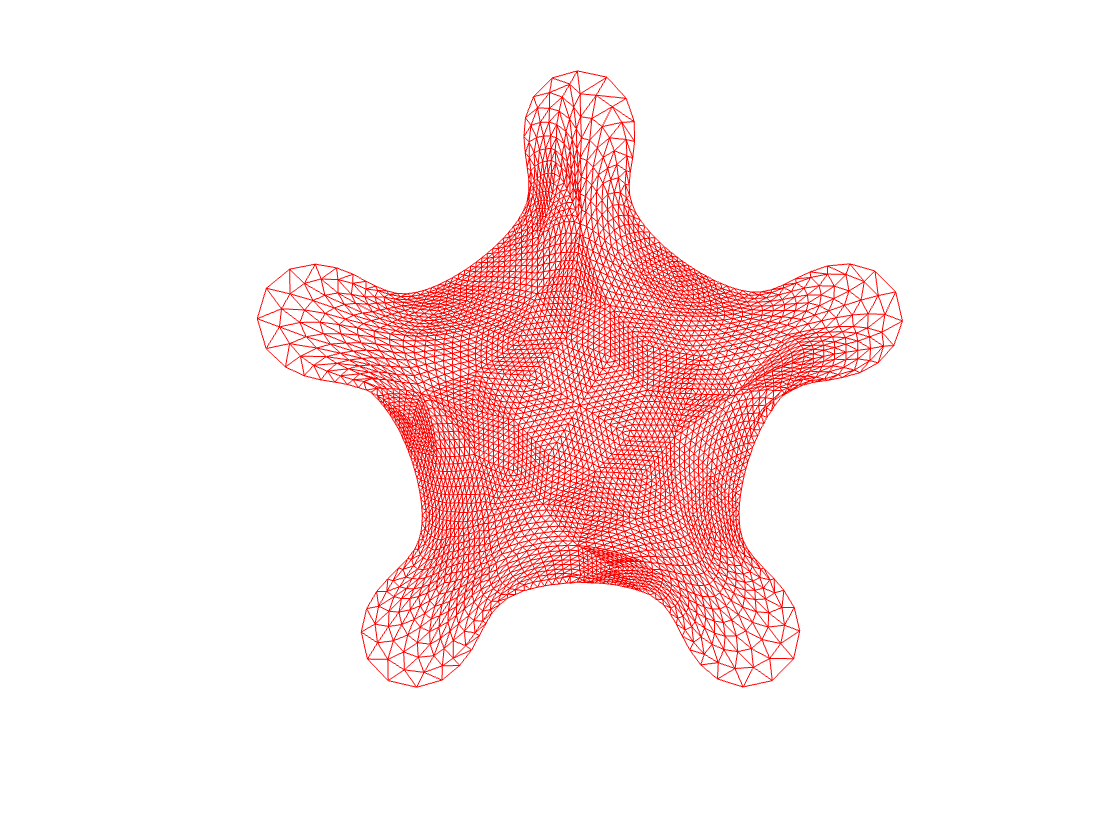}\hspace{-1.5cm}\includegraphics[scale=0.2]{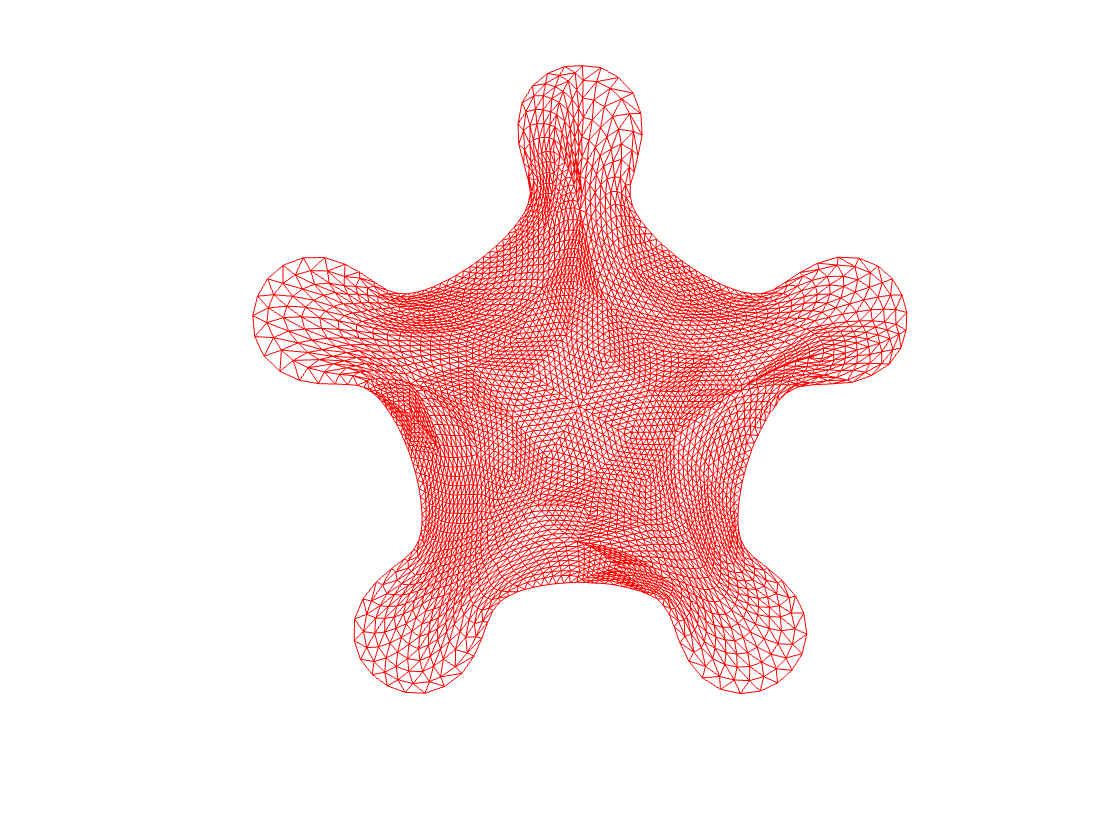}
  
  \vspace{-0.5cm}
  
  \caption{Final shapes for $p = 2$ (left) and $p = 1.1$ (right) on levels 4 (top) and 5 (bottom)}
  \label{fig-gingerbread_levels4and5}
\end{figure}

The two triangulations shown in the upper half of Figure \ref{fig-gingerbread_levels4and5}, associated with refinement level 4, are both severely distorted. For $p = 2$,
shown on the left, the degeneration causes the shape gradient iteration to terminate prematurely resulting in a shape approximation that is visibly different from
the optimal one. The result for $p = 1.1$ shown on the right, however, looks significantly better. For the resulting shapes at refinement level 5, shown in the lower
half of the figure, the improvement for $p = 1.1$ compared to $p = 2$ is much less pronounced. This behavior is also illustrated by the numbers in Table
\ref{table-example_2_shapiter_2} and Table \ref{table-example_2_shapiter_1point1}, where the functional value reached for $p = 1.1$ is lower, on all refinement
levels, than the one for $p = 2$. In particular, the values obtained for $p = 2$ on refinement level 4 and to some extent on level 5 are still quite far away from
the minimum.

\begin{table}[h!]
  \centering
  {\small
  \begin{tabular}{|c|cccc|}
    \hline $| \cT_h |$ & 2048 & 8192 & 32768 & 131072 \\ \hline
    $J (\Omega_h^\diamond)$ & $-1.447556 \cdot 10^{-2}$ & $-1.492032 \cdot 10^{-2}$ & $-1.496967 \cdot 10^{-2}$ & $-1.496967 \cdot 10^{-2}$ \\
    $\eta_{2,h} (\Omega_h^\diamond)$ & \;\;$2.8112 \cdot 10^{-3}$ & \;\;$1.5110 \cdot 10^{-3}$ & \;\;$4.7128 \cdot 10^{-4}$ & \;\;$2.4289 \cdot 10^{-4}$ \\ \hline
  \end{tabular}
  }
  \caption{Shape approximation for $p = 2$}
  \label{table-example_2_shapiter_2}
\end{table}

\begin{table}[h!]
  \centering
  {\small
  \begin{tabular}{|c|cccc|}
    \hline $| \cT_h |$ & 2048 & 8192 & 32768 & 131072 \\ \hline
    $J (\Omega_h^\diamond)$ & $-1.496129 \cdot 10^{-2}$ & $-1.496941 \cdot 10^{-2}$ & $-1.496972 \cdot 10^{-2}$ & $-1.496980 \cdot 10^{-2}$ \\
    $\eta_{1.1,h} (\Omega_h^\diamond)$ \!\!\! & \;\;$2.4922 \cdot 10^{-3}$ &\; \;$1.2436 \cdot 10^{-3}$ & \;\;$6.2465 \cdot 10^{-4}$ & \;\;$3.1597 \cdot 10^{-4}$ \\ \hline
  \end{tabular}
  }
  \caption{Shape approximation for $p = 1.1$}
  \label{table-example_2_shapiter_1point1}
\end{table}

\section{Adjustment of the rigid body modes}
\label{sec-rigid_body_modes}

In our examples above, the barycenter of the optimal shape was known to be the origin for symmetry reasons. Also the orientation of the optimal shape was
correctly approximated since we started with a disk as initial domain. In general, however, the barycenter may be unknown and translations need to be added
to our deformations. Moreover, adding rotations to the deformations in our shape gradient iteration may also be beneficial for its convergence if the initial
domain has a different orientation than the optimum one. We propose the following two-step procedure, where the rigid body modes are selected in the
first half-step and the second half-step consists of the procedure outlined in the previous section:
\begin{equation}
  \begin{split}
    \Omega_h^\odot & = \Omega_h + \rho^\odot \: , \\
    \Omega_h^\diamond & = (\id + \alpha \theta_h^\diamond) (\Omega_h^\odot) \: .
  \end{split}
\end{equation}
The steepest descent direction $\rho^\odot$ can be computed from
\begin{equation}
  \frac{J^\prime (\Omega) [\rho^\odot]}{||| \rho^\odot |||} = \inf_{\rho \in {\rm RM}} \frac{J^\prime (\Omega) [\rho]}{\| \rho \|_{W^{1,p} (\Omega)}} \: .
  \label{eq:gradient_RM}
\end{equation}
This is a rather simple low-dimensional optimization problem, particularly, since the shape derivative (\ref{eq:shape_derivative_pre}) simplifies to
$J^\prime (\Omega) [\rho] = ( f \: \nabla y_\Omega , \rho )$ for all $\rho \in {\rm RM}$.


\section{Extension to Problems with Constraints on the Shape}
\label{sec-constraints}

Often, shape optimization problems like those given by (\ref{eq:shape_functional}) and (\ref{eq:bvp}) come with an additional real-valued constraint $C (\Omega) = 0$
on the admissible shapes. For example, the area or perimeter for two-dimensional shapes or the volume or surface area for three-dimensional ones might be prescribed.
Given some $\Omega \subset \R^d$ satisfying $C (\Omega) = 0$, we restrict our deformations to the tangent space with respect to this constraint, i.e., to those
$\theta \in \Theta_{\perp,{\rm RM}}^{p^\ast}$ satisfying $C^\prime (\Omega) [\theta] = 0$. We denote this subspace by
\begin{equation}
  \Theta_{\perp,{\rm RM},\parallel}^{p^\ast} = \{ \chi \in \Theta_{\perp,{\rm RM}}^{p^\ast} : C^\prime (\Omega) [\chi] = 0 \} \: .
  \label{eq:tangent_space}
\end{equation}
Again, we would like to gain access to the norm of the corresponding tangential shape derivative
\begin{equation}
  \| J^\prime (\Omega) \|_{p^\ast;\perp,{\rm RM},\parallel}^\ast
  = \sup_{\chi \in \Theta_{\perp,{\rm RM},\parallel}} \frac{J^\prime (\Omega) [\chi]}{\| \varepsilon (\chi) \|_{L^{p^\ast} (\Omega)}} \: .
  \label{eq:dual_norm_elastic_tangent_space}
\end{equation}
Assuming that $C^\prime (\Omega)$ is continuous on $W^{1,p^\ast} (\Omega;\R^d)$ and that $C^\prime (\Omega) [\rho] = 0$ for all rigid body
deformations $\rho \in {\rm RM} (\R^d)$, we have that $C^\prime (\Omega)$ is already determined by its action on $\varepsilon (\chi)$. We may therefore write
\begin{equation}
  C^\prime (\Omega) [\chi] = ( Z^\perp , \varepsilon (\chi) ) \mbox{ with some } Z^\perp \in L^p (\Omega;\R^{d \times d}) \: .
  \label{eq:constraint_derivative_representation}
\end{equation}
Obviously, $Z^\perp$ may be chosen symmetric, i.e., in such a way that its anti-symmetric part vanishes: $\as \: Z^\perp = 0$. The subspace
\begin{equation}
  \Upsilon^{p,\perp}_{{\rm RM},\parallel}
  = \{ Z \in L^p (\Omega;\R^{d \times d}) : ( Z , \varepsilon (\theta) ) = 0 \mbox{ for all } \theta \in \Theta_{\perp,{\rm RM},\parallel}^{p^\ast} \}
  \label{eq:annihilator}
\end{equation}
is called the annihilator of $\Theta_{\perp,{\rm RM},\parallel}^{p^\ast}$ (cf. \cite[Sect. I.2.6]{Kat:80}). From (\ref{eq:tangent_space}) and
(\ref{eq:constraint_derivative_representation}) we deduce that $Z^\perp \in \Upsilon^{p,\perp}$.
Going through the proof of Theorem \ref{thrm-dual_norm_as_best_approximation_symmetric}, we see that the result relies on the existence of $\theta$ such
that
\begin{equation}
  ( S - K (u_\Omega,y_\Omega) , \varepsilon (\theta) ) = \| S - K (u_\Omega,y_\Omega) \|_{L^p (\Omega)} \| \varepsilon (\theta) \|_{L^{p^\ast} (\Omega)} \: ,
  \label{eq:equality_in_Hoelder}
\end{equation}
i.e., equality in H\"older's inequality in (\ref{eq:relation_shape_derivative_tensor_bound}), holds. This relation is satisfied if and only if $\varepsilon (\theta)$
is a multiple of $| S - K (u_\Omega,y_\Omega) |^{p-2} ( S - K (u_\Omega,y_\Omega) )$. For $\theta \in \Theta_{\perp,{\rm RM},\parallel}^{p^\ast}$ this implies
\begin{equation}
  ( Z^\perp , | S - K (u_\Omega,y_\Omega) |^{p-2} ( S - K (u_\Omega,y_\Omega) ) ) = 0
  \label{eq:tensor_projection}
\end{equation}
is satisfied, an additional condition on the shape tensor approximation. This motivates the following generalization of Theorem
\ref{thrm-dual_norm_as_best_approximation_symmetric}.

\begin{theorem}
  The dual norm of the shape derivative (\ref{eq:dual_norm_elastic_tangent_space}) is given by the following best approximation to the shape tensor
  $K (u_\Omega,y_\Omega)$:
  \begin{equation}
    \| J^\prime (\Omega) \|_{p^\ast;\perp,{\rm RM},\parallel}^\ast
    = \inf_{T \in \Sigma^{p,0}_{f,j,{\rm sym}} , \beta \in \R} \| T + \beta Z^\perp - K ( u_\Omega , y_\Omega ) \|_{L^p (\Omega)} \: .
    \label{eq:dual_norm_as_best_approximation_symmetric_constraint}
  \end{equation}
  \label{thrm-dual_norm_as_best_approximation_symmetric_constraint}
\end{theorem}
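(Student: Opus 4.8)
The plan is to mimic the proof of Theorem \ref{thrm-dual_norm_as_best_approximation_symmetric}, tracking the single extra scalar degree of freedom $\beta$ and the extra orthogonality condition \eqref{eq:tensor_projection} that it forces on the best approximation. First I would establish the easy inequality ``$\leq$''. For any $\chi \in \Theta_{\perp,{\rm RM},\parallel}^{p^\ast}$ and any $T \in \Sigma^{p,0}_{f,j,{\rm sym}}$, the identity \eqref{eq:relation_shape_derivative_tensor} still gives $J^\prime (\Omega) [\chi] = ( K ( u_\Omega , y_\Omega ) - T , \varepsilon (\chi) )$; since $\chi$ lies in the tangent space, $( Z^\perp , \varepsilon (\chi) ) = C^\prime (\Omega) [\chi] = 0$, so we may freely subtract $\beta Z^\perp$ for any $\beta \in \R$ to obtain $J^\prime (\Omega) [\chi] = ( K ( u_\Omega , y_\Omega ) - T - \beta Z^\perp , \varepsilon (\chi) )$. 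Applying H\"older's inequality and dividing by $\| \varepsilon (\chi) \|_{L^{p^\ast} (\Omega)}$ yields $J^\prime (\Omega) [\chi] / \| \varepsilon (\chi) \|_{L^{p^\ast} (\Omega)} \leq \| T + \beta Z^\perp - K ( u_\Omega , y_\Omega ) \|_{L^p (\Omega)}$; taking the supremum over $\chi$ and the infimum over $(T,\beta)$ gives ``$\leq$''.

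For the reverse inequality I would produce a minimizing pair $(S,\beta^\ast)$ of the right-hand side and then a deformation $\vartheta \in \Theta_{\perp,{\rm RM},\parallel}^{p^\ast}$ attaining equality. Existence and uniqueness of the minimizer of $\| T + \beta Z^\perp - K ( u_\Omega , y_\Omega ) \|_{L^p (\Omega)}$ over the closed affine subspace $\{ T + \beta Z^\perp : T \in \Sigma^{p,0}_{f,j,{\rm sym}}, \beta \in \R \}$ of the reflexive strictly convex space $L^p (\Omega;\R^{d\times d})$ follows exactly as in Theorem \ref{theorem-unique_S}. Writing $R := K ( u_\Omega , y_\Omega ) - S - \beta^\ast Z^\perp$, the first-order optimality conditions for this minimization give: (i) variation in $T \in \Sigma^{p,0}_{\rm sol,RM} \cap \{\text{symmetric}\}$ together with the surjectivity of $\as$ (Lemma \ref{lmm-as_surjective}) and the Helmholtz decomposition (Remark \ref{rmrk-Helmholtz}) — reproducing the argument in the proof of Theorem \ref{thrm-saddle_point_problem_p} — yields $| R |^{p-2} R = -\varepsilon (\theta)$ for some $\theta \in \Theta_{\perp,{\rm RM}}^{p^\ast}$ (up to sign conventions); and (ii) variation in $\beta$ gives precisely the extra scalar equation $( Z^\perp , | R |^{p-2} R ) = 0$, i.e. $( Z^\perp , \varepsilon (\theta) ) = 0$, which is \eqref{eq:tensor_projection} and says exactly that $\theta \in \Theta_{\perp,{\rm RM},\parallel}^{p^\ast}$. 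Setting $\vartheta = -\theta$ and repeating the chain of equalities \eqref{eq:numerator}--\eqref{eq:denominator} (with $S$ replaced by $S + \beta^\ast Z^\perp$, i.e. with $R$ in place of $S - K ( u_\Omega , y_\Omega )$) shows $J^\prime (\Omega) [\vartheta] / \| \varepsilon (\vartheta) \|_{L^{p^\ast} (\Omega)} = \| R \|_{L^p (\Omega)}$, which is the desired lower bound.

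The main obstacle I anticipate is not in the H\"older-equality manipulations, which are routine, but in making the optimality-condition step watertight in the presence of the constraint $C^\prime (\Omega) [\cdot] = 0$: one must verify that the deformation $\theta$ manufactured from the Helmholtz decomposition genuinely lands in the \emph{tangent} subspace $\Theta_{\perp,{\rm RM},\parallel}^{p^\ast}$ rather than merely in $\Theta_{\perp,{\rm RM}}^{p^\ast}$. This is exactly what the $\beta$-variation buys — condition \eqref{eq:tensor_projection} is the Euler--Lagrange equation in the added scalar direction — but care is needed to confirm that adjoining the one-dimensional subspace $\R Z^\perp$ does not destroy the closedness or the non-emptiness of the admissible set, that $Z^\perp$ is not already an element of the (symmetric part of the) constraint set $\Sigma^{p,0}_{f,j,{\rm sym}}$ in a way that would make $\beta$ redundant, and that the representation \eqref{eq:constraint_derivative_representation} with $\as \: Z^\perp = 0$ is compatible with the weak-symmetry bookkeeping used throughout Section \ref{sec-shape_tensor_best_approximation}. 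A secondary technical point is that one should phrase the whole argument on the larger affine set and only afterwards observe that the optimal $T + \beta^\ast Z^\perp$, being symmetric, differs from its own $\Sigma^{p,0}$-representative by a divergence-free field, so the value of the infimum is unchanged — mirroring the remark following \eqref{eq:KKT_2}.
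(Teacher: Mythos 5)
Your proposal is correct and follows essentially the same route as the paper: the upper bound via the annihilator property $( Z^\perp , \varepsilon (\chi) ) = 0$ and H\"older's inequality, existence of the minimizer over the enlarged affine set $\Sigma^{p,0}_{f,j,{\rm sym}} + \R Z^\perp$ as in Theorem \ref{theorem-unique_S}, and the KKT system in which the $\beta$-variation yields exactly the extra orthogonality $( Z^\perp , \varepsilon (\theta) ) = 0$ placing the Lagrange multiplier in the tangent space (a point the paper records in the remark following the theorem), followed by the same H\"older-equality chain with $S + \alpha Z^\perp$ in place of $S$.
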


\begin{proof}
  For all $\chi \in \Theta_{\perp,{\rm RM},\parallel}^{p^\ast}$ and for all $T \in \Sigma^{p,0}_{f,j,{\rm sym}}$, $\beta \in \R$,
  (\ref{eq:relation_shape_derivative_tensor}) together with (\ref{eq:annihilator}) implies
  \begin{equation}
    \begin{split}
      \frac{J^\prime (\Omega) [\chi]}{\| \varepsilon (\chi) \|_{L^{p^\ast} (\Omega)}}
      & = \frac{( K ( u_\Omega , y_\Omega ) - T , \varepsilon (\chi) )}{\| \varepsilon (\chi) \|_{L^{p^\ast} (\Omega)}} \\
      = & \frac{( K ( u_\Omega , y_\Omega ) - T , \varepsilon (\chi) ) - \beta ( Z^\perp , \varepsilon (\chi) )}{\| \varepsilon (\chi) \|_{L^{p^\ast} (\Omega)}} \\
      = & \frac{( K ( u_\Omega , y_\Omega ) - T - \beta Z^\perp , \varepsilon (\chi) )}{\| \varepsilon (\chi) \|_{L^{p^\ast} (\Omega)}}
      \leq \| K ( u_\Omega , y_\Omega ) - T - \beta Z^\perp \|_{L^p (\Omega)}
    \end{split}
    \label{eq:dual_inequality_tangential}
  \end{equation}
  by H\"older's inequality, which proves that the left hand side in (\ref{eq:dual_norm_as_best_approximation_symmetric_constraint}) does not exceed the right hand side.

  With the same argument as in Theorem \ref{theorem-unique_S}, a best approximation $S + \alpha Z^\perp \in \Sigma^{p,0}_{f,j,{\rm sym}} + {\rm span} \{ Z^\perp \}$ is
  obtained such that $\| S + \alpha Z^\perp - K ( u_\Omega , y_\Omega ) \|_{L^p (\Omega)}$ equals the right-hand side in
  (\ref{eq:dual_norm_as_best_approximation_symmetric_constraint}). The optimality conditions for this best approximation, in analogy to (\ref{eq:KKT}), are given by
  \begin{equation}
    \begin{split}
      ( | S + \alpha Z^\perp - K (u_\Omega,y_\Omega) |^{p-2} (S + \alpha Z^\perp - K (u_\Omega,y_\Omega)) , T ) \hspace{2cm} \\
      + ( \div \: T , \theta ) + ( \as \: T , \omega ) & = 0 \: , \\
      ( | S + \alpha Z^\perp - K (u_\Omega,y_\Omega) |^{p-2} (S + \alpha Z^\perp - K (u_\Omega,y_\Omega)) , Z^\perp ) & = 0 \: , \\      
      ( \div \: S , \chi ) - ( f \: \nabla y_\Omega , \chi ) - ( j (u_\Omega) , \div \: \chi ) & = 0 \: , \\
      ( \as \: S , \gamma ) & = 0
    \end{split}
    \label{eq:KKT_constraint_pre}
  \end{equation}
  for all $T \in \Sigma^{p,0}$, $\chi \in \Theta^{p^\ast}_{\perp,{\rm RM}}$ and $\gamma \in \Xi^{p^\ast}$ with Lagrange multipliers
  $\theta \in \Theta^{p^\ast}_{\perp,{\rm RM}}$ and $\omega \in \Xi^{p^\ast}$. Since the approximation space $\Sigma^{p,0}_{f,j,{\rm sym}} + {\rm span} \{ Z^\perp \}$
  contains $\Sigma^{p,0}_{f,j,{\rm sym}}$, the one associated with the situation without constraints, the solvability of (\ref{eq:KKT_constraint_pre}) is
  still guaranteed.
  
  With $\theta \in \Theta_{\perp,{\rm RM},\parallel}^{p^\ast}$ being the Lagrange multiplier solving (\ref{eq:KKT_constraint_pre}), we obtain from
  (\ref{eq:relation_shape_derivative_tensor}) and (\ref{eq:annihilator}) that
  \begin{equation}
    J^\prime (\Omega) [\theta] = ( K ( u_\Omega,y_\Omega ) - S , \varepsilon (\theta) ) = ( K ( u_\Omega,y_\Omega ) - S - \alpha Z^\perp , \varepsilon (\theta) )
    \label{eq:numerator_pre}
  \end{equation}
  holds. Moreover, integration by parts of the first equation in (\ref{eq:KKT_constraint_pre}) together with the symmetry of $S$,
  $Z^\perp$ and $K (u_\Omega,y_\Omega)$ implies
  \begin{equation}
    \varepsilon (\theta) = | S + \alpha Z^\perp - K (u_\Omega,y_\Omega) |^{p-2} (S + \alpha Z^\perp - K (u_\Omega,y_\Omega)) \: .
    \label{eq:deformation_relation_symmetric}
  \end{equation}
  Inserting (\ref{eq:deformation_relation_symmetric}) into (\ref{eq:numerator_pre}) leads to
  \begin{equation}
    \begin{split}
      J^\prime (\Omega) [\theta] & = - ( S + \alpha Z^\perp - K (u_\Omega,y_\Omega) , \\
      & \hspace{2.5cm} | S + \alpha Z^\perp - K (u_\Omega,y_\Omega) |^{p-2} \: (S + \alpha Z^\perp - K (u_\Omega,y_\Omega)) ) \\
      & = - \| S + \alpha Z^\perp - K (u_\Omega,y_\Omega) \|_{L^p (\Omega)}^p \: ,
    \end{split}
    \label{eq:numerator_constraint}
  \end{equation}
  while taking norms on both sides of (\ref{eq:deformation_relation_symmetric}) gives
  \begin{equation}
    \begin{split}
      & \| \varepsilon (\theta) \|_{L^{p^\ast} (\Omega)}
      = \| | S + \alpha Z^\perp - K (u_\Omega,y_\Omega) |^{p-2} (S + \alpha Z^\perp - K (u_\Omega,y_\Omega)) \|_{L^{p^\ast} (\Omega)} \\
      & = \left( \int_\Omega | S + \alpha Z^\perp - K (u_\Omega,y_\Omega) |^{(p-2) p^\ast + p^\ast} dx \right)^{1/p^\ast} \\
      & = \left( \int_\Omega | S + \alpha Z^\perp - K (u_\Omega,y_\Omega) |^p dx \right)^{(p-1)/p}
      \! = \| S + \alpha Z^\perp - K (u_\Omega,y_\Omega) \|_{L^p (\Omega)}^{p-1} \: .
    \end{split}
    \label{eq:denominator_constraint}
  \end{equation}
  From (\ref{eq:numerator_constraint}) and (\ref{eq:denominator_constraint}) we deduce that
  \begin{equation}
    \frac{J^\prime (\Omega) [\theta]}{\| \varepsilon (\theta) \|_{L^{p^\ast} (\Omega)}} = - \| S + \alpha Z^\perp - K (u_\Omega,y_\Omega) \|_{L^p (\Omega)} \: ,
  \end{equation}
  which finishes the proof of (\ref{eq:dual_norm_as_best_approximation_symmetric_constraint}).
\end{proof}

\begin{remark}
  From (\ref{eq:deformation_relation_symmetric}), the second equation in (\ref{eq:KKT_constraint_pre}) and (\ref{eq:constraint_derivative_representation}), we get
  \begin{equation}
    0 = ( \varepsilon (\theta) , Z^\perp ) = C^\prime (\Omega) [\theta]
  \end{equation}
  and therefore $\theta \in \Theta^{p^\ast}_{\perp,{\rm RM},\parallel}$. With this we obtain our final version of the KKT
  conditions in the constraint case as finding $S \in \Sigma^{p,0}$, $\alpha \in \R$, $\theta \in \Theta^{p^\ast}_{\perp,{\rm RM},\parallel}$ and
  $\omega \in \Xi^{p^\ast}$ such that
  \begin{equation}
    \begin{split}
      ( | S + \alpha Z^\perp - K (u_\Omega,y_\Omega) |^{p-2} (S + \alpha Z^\perp - K (u_\Omega,y_\Omega)) , T ) \hspace{2cm} \\
      + ( \div \: T , \theta ) + ( \as \: T , \omega ) & = 0 \: , \\
      ( | S + \alpha Z^\perp - K (u_\Omega,y_\Omega) |^{p-2} (S + \alpha Z^\perp - K (u_\Omega,y_\Omega)) , Z^\perp ) & = 0 \: , \\      
      ( \div \: S , \chi ) - ( f \: \nabla y_\Omega , \chi ) - ( j (u_\Omega) , \div \: \chi ) & = 0 \: , \\
      ( \as \: S , \gamma ) & = 0
    \end{split}
    \label{eq:KKT_constraint}
  \end{equation}
  holds for all $T \in \Sigma^{p,0}$, $\chi \in \Theta^{p^\ast}_{\perp,{\rm RM},\parallel}$ and $\gamma \in \Xi^{p^\ast}$. In fact, $\alpha Z^\perp$ is the closest-point
  projection of $S - K (u_\Omega,y_\Omega)$ with respect to $L^p (\Omega;\R^{d \times d})$ onto $\Upsilon_{{\rm RM},\parallel}^{p,\perp}$.
\end{remark}

\noindent
{\bf Example 3.}
Very common in the context of shape optimization problems are volume constraints, i.e., $C (\Omega) = | \Omega | - c$ with some constant $c > 0$. In this case,
we have
\begin{equation}
  C^\prime (\Omega) [ \theta ] = ( 1 , \div \: \theta ) = ( I , \varepsilon (\theta) ) \: ,
\end{equation}
which leads to the choice $Z^\perp \equiv I \in \R^{d \times d}$. Let us consider the situation in Example 1 with an additional volume constraint. The shape tensor
$K (u_R,y_R)$ for the disk $D_R$ with radius $R$ is still given by (\ref{eq:tensor_example_1}). We can satisfy (\ref{eq:KKT_constraint}) with $\theta = 0$ and
$\omega = 0$ by choosing
\begin{equation}
  S = K (u_R,y_R) + \gamma \: I \mbox{ with } \gamma = \left\{
  \begin{array}{ll} \frac{R^2}{16} \: , \: & R \leq 1 \: , \\ \frac{1}{8} - \frac{R^2}{16} \: , \: & R \geq 1 \: , \end{array}
  \right.
  \label{eq:example_1_solution_constraint}
\end{equation}
and $\alpha = - \gamma$. Note that (\ref{eq:KKT_constraint}) is indeed satisfied for any $\gamma \in \R$ but only the above choice in
(\ref{eq:example_1_solution_constraint}) ensures that $S \cdot n = 0$ on $\partial D_R$ so that $S \in \Sigma^{p,0}_{f,j,{\rm sym}}$ holds.
In any case, we can deduce from Theorem \ref{thrm-dual_norm_as_best_approximation_symmetric_constraint} that
$\| J^\prime (\Omega_R) \|_{p^\ast;\perp,{\rm RM},\parallel}^\ast = 0$ and therefore that $D_R$ is a stationary (in fact, an optimal) shape if
its area $| D_R | = \pi R^2$ equals $c$.

\noindent
{\bf Example 4.}
Another example, often encountered in applications, are constraints on the surface area or perimeter in three or two dimensions, respectively:
$C (\Omega) = | \partial \Omega | - c$, $c > 0$. In this case, the shape derivative is given by
\begin{equation}
  C^\prime (\Omega) [\theta] = \langle 1 , \tr \: ( \nabla \theta ) - n \cdot (\nabla \theta \cdot n) \rangle
  = \langle 1 , \tr \: ( \varepsilon (\theta) ) - n \cdot (\varepsilon (\theta) \cdot n) \rangle
  = \langle I - n \otimes n , \varepsilon (\theta) \rangle
  \label{eq:surface_constraint_shape_derivative}
\end{equation}
with the duality pairing $\langle \cdot , \cdot \rangle$ on the boundary $\partial \Omega$. Since it only involves derivatives of $\theta$ in tangential directions,
we have $\tr \: ( \varepsilon (\theta) ) - n \cdot (\varepsilon (\theta) \cdot n) \in W^{-1/p^\ast,p^\ast} (\partial \Omega)$ if $\theta \in W^{1,p^\ast}$ for sufficiently
regular domains and the right-hand side in (\ref{eq:surface_constraint_shape_derivative}) is well-defined. In the two-dimensional case, the representation
(\ref{eq:constraint_derivative_representation}) of the shape derivative (\ref{eq:surface_constraint_shape_derivative}) can be derived as follows: Let $t$ be
the tangential direction (orthogonal to $n$ for each point on the boundary curve $\partial \Omega$), then (\ref{eq:surface_constraint_shape_derivative}) turns
into
\begin{equation}
    C^\prime (\Omega) [\theta] = \langle t \otimes t , \varepsilon (\theta) \rangle = \langle t \otimes t , \nabla \theta \rangle = \langle t , \nabla \theta \cdot t \rangle \: .
  \label{eq:surface_constraint_shape_derivative_rewritten}
\end{equation}
Assuming $t \in W^{1-1/p,p} (\partial \Omega;\R^2)$, we can find an extension $r \in W^{1,p} (\Omega;\R^2)$ with $\div \: r = 0$ such that
$\left. r \right|_{\partial \Omega} = t$.
Using Stokes' theorem, (\ref{eq:surface_constraint_shape_derivative_rewritten}) turns into
\begin{equation}
  \begin{split}
    C^\prime (\Omega) [\theta] = \langle r , \nabla \theta \cdot t \rangle & = ( r , \curl \: \nabla \theta ) - ( \nabla^\perp r , \nabla \theta ) \\
    & = - ( \nabla^\perp r , \nabla \theta ) = - ( \nabla^\perp r , \varepsilon (\theta) )
  \end{split}
  \label{eq:surface_constraint_shape_derivative_volume_form}
\end{equation}
(with the notation $\nabla^\perp r = ( \partial_2 r , - \partial_1 r )$),
where the last equality follows from the symmetry of $\nabla^\perp r$ due to $\div \: r = 0$. In other words, $Z^\perp = - \nabla^\perp r$ in the representation
(\ref{eq:constraint_derivative_representation}) used in Theorem \ref{thrm-dual_norm_as_best_approximation_symmetric_constraint}.
  
Let us once more consider the disk $D_R$, now with radius $R$ such that $2 \pi R = c$. The tangential field on $\partial D_R$ is given by
$t = ( - x_2 , x_1 ) / R$ which can be extended trivially to $r = ( - x_2 , x_1 ) / R$ on $D_R$. We obtain $Z^\perp = - \nabla^\perp r = - I / R$ and the same
arguments as in Example 7.3 imply that $D_R$ is also stationary among all shapes of perimeter $| \partial \Omega | = 2 \pi R$.
  
In general, one would not want to rewrite a perimeter or surface area constraint in the form (\ref{eq:surface_constraint_shape_derivative_volume_form}) as
volume integral but rather work directly with (\ref{eq:surface_constraint_shape_derivative}). This requires modifications in the shape tensor space used to
approximate $K (u_\Omega,y_\Omega)$ in Section \ref{sec-shape_tensor_best_approximation}. We plan to explore this, also in the context of introducing
Neumann boundary conditions into (\ref{eq:bvp}), in a separate contribution.

\bibliography{../../../biblio/msc_articles,../../../biblio/msc_books}

\providecommand{\href}[2]{#2}
\providecommand{\arxiv}[1]{\href{http://arxiv.org/abs/#1}{arXiv:#1}}
\providecommand{\url}[1]{\texttt{#1}}
\providecommand{\urlprefix}{URL }
\begin{thebibliography}{10}

\bibitem{AdlVas:14}
\newblock J.~H. Adler and P.~S. Vassilevski,
\newblock Error analysis for constrained first-order system least-squares
  finite-element methods,
\newblock \emph{SIAM J. Sci. Comput.}, \textbf{36} (2014), A1071--A1088.

\bibitem{AlbLauYou:21}
\newblock Y.~F. Albuquerque, A.~Laurain and I.~Yousept,
\newblock Level set-based shape optimization approach for sharp-interface
  reconstructions in time-domain full waveform inversion,
\newblock \emph{SIAM J. Appl. Math.}, \textbf{81} (2021), 939--964.

\bibitem{AllDapJou:21}
\newblock G.~Allaire, C.~Dapogny and F.~Jouve,
\newblock Shape and topology optimization,
\newblock in \emph{Geometric partial differential equations. {P}art {II}},
  vol.~22 of Handb. Numer. Anal.,
\newblock Elsevier/North-Holland, Amsterdam, 2021,
\newblock 1--132.

\bibitem{ApeMatPfeRoe:18}
\newblock T.~Apel, M.~Mateos, J.~Pfefferer and A.~R\"{o}sch,
\newblock Error estimates for {D}irichlet control problems in polygonal
  domains: quasi-uniform meshes,
\newblock \emph{Math. Control Relat. Fields}, \textbf{8} (2018), 217--245,
\newblock \urlprefix\url{https://doi.org/10.3934/mcrf.2018010}.

\bibitem{ArnBreDou:84}
\newblock D.~N. Arnold, F.~Brezzi and J.~Douglas~Jr.,
\newblock P{EERS}: a new mixed finite element for plane elasticity,
\newblock \emph{Japan J. Appl. Math.}, \textbf{1} (1984), 347--367.

\bibitem{AtkHan:09}
\newblock K.~Atkinson and W.~Han,
\newblock \emph{Theoretical numerical analysis}, vol.~39 of Texts in Applied
  Mathematics,
\newblock 3rd edition,
\newblock Springer, Dordrecht, 2009.

\bibitem{BarWac:20}
\newblock S.~Bartels and G.~Wachsmuth,
\newblock Numerical approximation of optimal convex shapes,
\newblock \emph{SIAM J. Sci. Comput.}, \textbf{42} (2020), A1226--A1244.

\bibitem{Ber:10}
\newblock M.~Berggren,
\newblock A unified discrete-continuous sensitivity analysis method for shape
  optimization,
\newblock in \emph{Applied and numerical partial differential equations},
  vol.~15 of Comput. Methods Appl. Sci.,
\newblock Springer, New York, 2010,
\newblock 25--39.

\bibitem{BerSta:16}
\newblock F.~Bertrand and G.~Starke,
\newblock Parametric {R}aviart-{T}homas elements for mixed methods on domains
  with curved surfaces,
\newblock \emph{SIAM J. Numer. Anal.}, \textbf{54} (2016), 3648--3667.

\bibitem{BofBreFor:13}
\newblock D.~Boffi, F.~Brezzi and M.~Fortin,
\newblock \emph{Mixed finite element methods and applications}, vol.~44 of
  Springer Series in Computational Mathematics,
\newblock Springer, Heidelberg, 2013.

\bibitem{CaiSta:04}
\newblock Z.~Cai and G.~Starke,
\newblock Least-squares methods for linear elasticity,
\newblock \emph{SIAM J. Numer. Anal.}, \textbf{42} (2004), 826--842.

\bibitem{DecHerHin:22}
\newblock K.~Deckelnick, P.~J. Herbert and M.~Hinze,
\newblock A novel {$W^{1,\infty}$} approach to shape optimisation with
  {L}ipschitz domains,
\newblock \emph{ESAIM Control Optim. Calc. Var.}, \textbf{28} (2022), Paper No.
  2, 29.

\bibitem{DecHerHin:23b}
\newblock K.~Deckelnick, P.~J. Herbert and M.~Hinze,
\newblock Convergence of a steepest descent algorithm in shape optimisation
  using {$W^{1,\infty}$} functions, 2023,
\newblock ArXiv:2310.15078v1.

\bibitem{DecHerHin:23a}
\newblock K.~Deckelnick, P.~J. Herbert and M.~Hinze,
\newblock {PDE} constrained shape optimization with first-order and
  {N}ewton-type methods in the {$W^{1,\infty}$} topology, 2023,
\newblock ArXiv:2301.08690v2.

\bibitem{EigStu:18}
\newblock M.~Eigel and K.~Sturm,
\newblock Reproducing kernel {H}ilbert spaces and variable metric algorithms in
  {PDE}-constrained shape optimization,
\newblock \emph{Optim. Methods Softw.}, \textbf{33} (2018), 268--296.

\bibitem{ErnVoh:15}
\newblock A.~Ern and M.~Vohral\'{\i}k,
\newblock Polynomial-degree-robust a posteriori estimates in a unified setting
  for conforming, nonconforming, discontinuous {G}alerkin, and mixed
  discretizations,
\newblock \emph{SIAM J. Numer. Anal.}, \textbf{53} (2015), 1058--1081.

\bibitem{EtlHerLoaWac:20}
\newblock T.~Etling, R.~Herzog, E.~Loayza and G.~Wachsmuth,
\newblock First and second order shape optimization based on restricted mesh
  deformations,
\newblock \emph{SIAM J. Sci. Comput.}, \textbf{42} (2020), A1200--A1225.

\bibitem{FabMenMit:98}
\newblock E.~Fabes, O.~Mendez and M.~Mitrea,
\newblock Boundary layers on {S}obolev-{B}esov spaces and {P}oisson's equation
  for the {L}aplacian in {L}ipschitz domains,
\newblock \emph{J. Funct. Anal.}, \textbf{159} (1998), 323--368.

\bibitem{FujMor:77}
\newblock D.~Fujiwara and H.~Morimoto,
\newblock An {$L_{r}$}-theorem of the {H}elmholtz decomposition of vector
  fields,
\newblock \emph{J. Fac. Sci. Univ. Tokyo Sect. IA Math.}, \textbf{24} (1977),
  685--700.

\bibitem{Gal:11}
\newblock G.~P. Galdi,
\newblock \emph{An introduction to the mathematical theory of the
  {N}avier-{S}tokes equations},
\newblock 2nd edition,
\newblock Springer Monographs in Mathematics, Springer, New York, 2011,
\newblock Steady-state problems.

\bibitem{HauSieUlb:21}
\newblock J.~Haubner, M.~Siebenborn and M.~Ulbrich,
\newblock A continuous perspective on shape optimization via domain
  transformations,
\newblock \emph{SIAM J. Sci. Comput.}, \textbf{43} (2021), A1997--A2018.

\bibitem{HerLoa:24}
\newblock R.~Herzog and E.~Loayza-Romero,
\newblock A discretize-then-optimize approach to {PDE}-constrained shape
  optimization,
\newblock \emph{ESAIM Control Optim. Calc. Var.}, \textbf{30} (2024), Paper No.
  11, 36.

\bibitem{HipPagSar:15}
\newblock R.~Hiptmair, A.~Paganini and S.~Sargheini,
\newblock Comparison of approximate shape gradients,
\newblock \emph{BIT}, \textbf{55} (2015), 459--485.

\bibitem{ItoKun:08}
\newblock K.~Ito and K.~Kunisch,
\newblock \emph{Lagrange multiplier approach to variational problems and
  applications}, vol.~15 of Advances in Design and Control,
\newblock Society for Industrial and Applied Mathematics (SIAM), Philadelphia,
  PA, 2008.

\bibitem{JiaKau:17}
\newblock R.~Jiang and A.~Kauranen,
\newblock Korn's inequality and {J}ohn domains,
\newblock \emph{Calc. Var. Partial Differential Equations}, \textbf{56} (2017),
  Paper No. 109, 18.

\bibitem{Kat:80}
\newblock T.~Kato,
\newblock \emph{Perturbation theory for linear operators}, vol. Band 132 of Die
  Grundlehren der mathematischen Wissenschaften,
\newblock 2nd edition,
\newblock Springer-Verlag New York, Inc., New York, 1980.

\bibitem{Lau:20}
\newblock A.~Laurain,
\newblock Distributed and boundary expressions of first and second order shape
  derivatives in nonsmooth domains,
\newblock \emph{J. Math. Pures Appl. (9)}, \textbf{134} (2020), 328--368.

\bibitem{LauLopNak:23}
\newblock A.~Laurain, P.~T.~P. Lopes and J.~C. Nakasato,
\newblock An abstract {L}agrangian framework for computing shape derivatives,
\newblock \emph{ESAIM Control Optim. Calc. Var.}, \textbf{29} (2023), Paper No.
  5, 35.

\bibitem{LauStu:16}
\newblock A.~Laurain and K.~Sturm,
\newblock Distributed shape derivative {\it via} averaged adjoint method and
  applications,
\newblock \emph{ESAIM Math. Model. Numer. Anal.}, \textbf{50} (2016),
  1241--1267.

\bibitem{LauWinYou:21}
\newblock A.~Laurain, M.~Winckler and I.~Yousept,
\newblock Shape optimization for superconductors governed by {${\rm
  H(curl)}$}-elliptic variational inequalities,
\newblock \emph{SIAM J. Control Optim.}, \textbf{59} (2021), 2247--2272.

\bibitem{SchSieWel:16}
\newblock V.~H. Schulz, M.~Siebenborn and K.~Welker,
\newblock Efficient {PDE} constrained shape optimization based on
  {S}teklov-{P}oincar\'{e}-type metrics,
\newblock \emph{SIAM J. Optim.}, \textbf{26} (2016), 2800--2819.

\bibitem{Sta:24}
\newblock G.~Starke,
\newblock Shape optimization by constrained first-order system mean
  approximation,
\newblock \emph{SIAM J. Sci. Comput.},
\newblock To Appear, also available as arXiv:2309.13595v2.

\bibitem{Ste:91}
\newblock R.~Stenberg,
\newblock Postprocessing schemes for some mixed finite elements,
\newblock \emph{RAIRO Mod\'{e}l. Math. Anal. Num\'{e}r.}, \textbf{25} (1991),
  151--167.

\end{thebibliography}
\bibliographystyle{AIMS}

\end{document}